\newtheorem{theorem}{Theorem}[section]
\newtheorem{lemma}[theorem]{Lemma}
\newtheorem{proposition}[theorem]{Proposition}
\newtheorem{corollary}[theorem]{Corollary}
\numberwithin{equation}{section}
\theoremstyle{definition}
\theoremstyle{remark}
\newtheorem{remark}{Remark} 
\theoremstyle{question}
\newcommand{\bl}{\bullet}
\newcommand{\wi}{\circ}
\newcommand{\n}{n}
\newcommand{\T}{\mathbb T}
\newcommand{\OO}{\mathcal O}
\newcommand{\LL}{\mathcal L}
\newcommand{\Space}{\Omega}
\newcommand{\wind}{\textnormal{w}}
\title[On delocalization in the six-vertex model]{On delocalization in the six-vertex model}
\date{\today}
\author{Marcin Lis}{
\address{Faculty of Mathematics\\ University of Vienna \\
Oskar-Morgenstern-Platz 1\\
1090 Wien}
\email{marcin.lis@univie.ac.at}}
\begin{document}

\begin{abstract} 
We show that the six-vertex model with parameter $c\in[\sqrt 3, 2]$ on a square lattice torus has an ergodic infinite-volume limit as the size of the torus grows to infinity.
Moreover we prove that for $c\in[\sqrt{2+\sqrt 2}, 2]$, the associated height function on $\mathbb Z^2$ has unbounded variance.

The proof relies on an extension of the Baxter--Kelland--Wu representation of the six-vertex model 
to multi-point correlation functions of the associated spin model. 
Other crucial ingredients are the uniqueness and percolation properties of the critical random cluster measure for $q\in[1,4]$, and recent results relating the decay of correlations in the spin model with the delocalization of the height function.
\end{abstract}

\maketitle
\section{Introduction}

\subsection*{Background and main results}
An \emph{arrow configuration} on a $4$-regular graph is an assignment of an arrow to every edge such that exactly two arrows point towards each vertex. 
The \emph{six-vertex model} (or more precisely the \emph{F model}) with parameter $c>0$ on a finite 4-regular graph embedded in a surface is
the probability measure on all arrow configurations that is proportional to $c^{N}$, where $N$ is the number of vertices of type $3a$ or $3b$ in the configuration (see Fig.~\ref{fig:6v}). 
These are the vertices for which the arrows alternate between incoming and outgoing as one goes around the vertex. 

The three-dimensional prototype of the model with $c=1$ (the uniform measure on arrow configurations) was introduced by Pauling~\cite{Pau} in 1935 to study the residual entropy of ice arising from the phenomenon of hydrogen bonding. 
The square lattice version discussed here, called the F model, first appeared in the work of
Rys on antiferroelectricity~\cite{Rys}. The exact value of the free energy per site on the square lattice
was given by Lieb~\cite{Lieb,Lieb1} using the transfer matrix method. Since then the six-vertex model has been a prominent example of an integrable lattice model 
of equilibrium statistical mechanics.
For a detailed account of the model and its history we refer the reader to~\cite{LiebWu,BaxterBook,Reshetikhin}.

\begin{figure}
\begin{center}
 \includegraphics[scale=0.9]{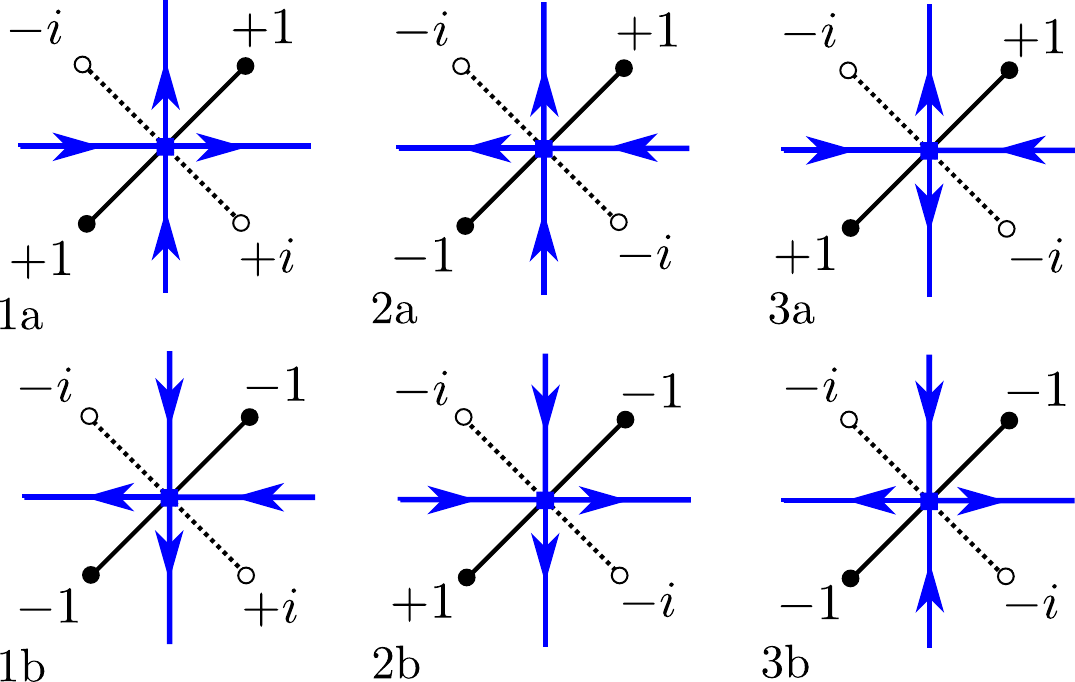}
\caption{Six possible arrow configurations around a vertex of~$\T_{\n}$. To each such configuration there correspond two spin configurations which differ by a global sign change.
The configuration with the upper left spin fixed to $-i$ is depicted here.
The solid (resp.\ dotted) black line represents an edge of $\T^{\bl}_{\n}$ (resp.\ $\T^{\wi}_{\n}$). 
}
 \label{fig:6v}
\end{center}
\end{figure}

In this article we consider the six-vertex model on a toroidal piece of the square lattice 
\[
\mathbb T_{\n}=(\mathbb Z/2nj_1 \mathbb Z)\times (\mathbb Z/2nj_2 \mathbb Z)
\] 
of size $2nj_1 \times 2nj_2$, where $j_1,j_2 \in \mathbb N$ are fixed and $\n$ increases to infinity. We denote the corresponding probability measure by $\mu_\n=\mu_\n^c$.
In the first main result we establish convergence to an infinite-volume measure for the model with $c\in[\sqrt{3},2]$, and derive a spatial mixing property of the limit.

\begin{theorem}\label{thm:existence}
Let $c\in[\sqrt{3},2]$.
\begin{itemize}
\item[$(i)$] There exists a translation invariant probability measure $\mu=\mu^c$ (independent of $j_1$ and $j_2$) on arrow configurations on $\mathbb Z^2$, 
such that 
\[
\mu_{\n} \to \mu \text{ weakly} \quad \textnormal{ as } n \to \infty. 
\]
\item[$(ii)$] There exists $\kappa=\kappa(c)>0$ such that for any two local events $A$ and $B$ depending on the state (orientation) of edges in finite boxes 
$\Lambda, \Lambda' \subset E(\mathbb Z^2)$ respectively, we have
\begin{align}\label{eq:mixing}
|\mu(A\cap B) -\mu(A)\mu(B)| \leq K_{|\Lambda|,|\Lambda'|}d(\Lambda,\Lambda')^{-\kappa},
\end{align}
where $d(\Lambda,\Lambda')$ is the graph distance between $\Lambda$ and~$\Lambda'$, and where $K_{|\Lambda|,|\Lambda'|}$ depends only on the size of the boxes.
\item[$(iii)$] In particular, $\mu$ is ergodic with respect to any nontrivial translation.
\end{itemize}
\end{theorem}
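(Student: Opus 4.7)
The plan is to exploit the Baxter--Kelland--Wu (BKW) representation, which identifies the six-vertex measure with the $q$-random cluster model ($q=c^{2}\in[3,4]$ in our range) equipped with independent uniform orientations of the loops it produces on the medial graph. On $\mathbb{T}_n$ the representation acquires complex phases on non-contractible loops, but local arrow observables in a box $\Lambda$ are determined by the loops that cross a bounded neighbourhood of $\Lambda$, together with their orientations.

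For $(i)$, I would invoke recent work on the random cluster model for $q\in[1,4]$ (Duminil-Copin--Sidoravicius--Tassion and subsequent papers), which yields continuity of the critical phase transition, uniqueness of the critical infinite-volume measure $\phi$ on $\mathbb{Z}^2$, and uniform RSW estimates; in particular the finite-volume critical measures $\phi_n$ on $\mathbb{T}_n$ converge weakly to $\phi$. I would define $\mu$ as the push-forward of $\phi$ under independent uniform orientations of the loops and deduce $\mu_n\to\mu$ by approximating arrow observables by functionals of loops in a finite window; the contribution of the complex phases from non-contractible loops on $\mathbb{T}_n$ is controlled by RSW, which forces the probability that a large loop crosses any fixed window to tend to $0$. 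For $(ii)$, under the coupling, on the event that no cluster meets both $\Lambda$ and $\Lambda'$ the two events $A$ and $B$ depend on disjoint collections of loops with independent orientations, hence are conditionally independent. A union bound therefore gives
\[
|\mu(A\cap B)-\mu(A)\mu(B)| \le 2\,\phi(\Lambda\leftrightarrow\Lambda') \le K_{|\Lambda|,|\Lambda'|}\,d(\Lambda,\Lambda')^{-\kappa},
\]
using the polynomial two-point decay $\phi(x\leftrightarrow y)\le C|x-y|^{-\kappa}$ that accompanies a continuous phase transition. Part $(iii)$ is then standard: polynomial mixing of cylinder events implies $\mu(A\cap\tau^k A)\to\mu(A)^{2}$ for every cylinder event $A$ and every nontrivial translation $\tau$.

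The main obstacle is the rigorous implementation of BKW on $\mathbb{T}_n$ and the control of the complex winding phases of non-contractible loops. In the plane BKW is an honest probabilistic coupling, but on the torus each non-contractible loop contributes a $c$-dependent phase, and one must show these phases become irrelevant for local arrow observables as $n\to\infty$. The uniform RSW bounds for $q\in[1,4]$ --- preventing large loops from coming close to a fixed window with uniformly positive probability --- should provide the right tool, but the combinatorial bookkeeping of how the phases combine across loops is where I expect the real work to be.
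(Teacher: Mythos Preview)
There are two genuine gaps in your plan.

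First, the relation between the six-vertex parameter and the random cluster parameter is not $q=c^{2}$. The BKW correspondence gives $c=2\cos(\lambda/2)$ and $\sqrt{q}=2\cos\lambda$, so that $q=(c^{2}-2)^{2}$; for $c\in[\sqrt{3},2]$ this yields $q\in[1,4]$, which is indeed the range where the results of Duminil-Copin--Sidoravicius--Tassion apply, but the identification $q=c^{2}\in[3,4]$ is incorrect.

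Second, and more seriously, the heart of your argument --- ``define $\mu$ as the push-forward of $\phi$ under independent uniform orientations of the loops'' and, for part~(ii), ``the two events depend on disjoint collections of loops with independent orientations, hence are conditionally independent'' --- is not valid for $c<2$. In the BKW decomposition the weight of an oriented contractible loop is $e^{\pm i\lambda}$, not $1$, so the conditional ``law'' of the orientation given the unoriented loop has complex weights $e^{\pm i\lambda}/\sqrt{q}$ rather than $\tfrac12,\tfrac12$. The representation is a genuine probabilistic coupling only at $c=2$ (where $\lambda=0$); for $c\in[\sqrt{3},2)$ it is a complex-measure identity, as the paper emphasises in the introduction. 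In particular there is no coupling under which $A$ and $B$ become conditionally independent once a separating circuit exists, and your bound $|\mu(A\cap B)-\mu(A)\mu(B)|\le 2\phi(\Lambda\leftrightarrow\Lambda')$ does not follow. The complex phases you flag as a torus issue are in fact present for \emph{every} contractible loop.

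The paper circumvents this by never attempting a probabilistic coupling. Instead it introduces the spin $\sigma(u)=i^{h(u)}$ and derives, via BKW, an exact identity expressing each multi-spin correlation $\mathbf E_{\mu_n}[\prod\sigma(u_i)\prod\sigma(v_j)]$ as a $\phi_n$-expectation of a bounded product $\prod_{\ell}\rho(\ell)$ of (possibly negative, possibly non-real) loop statistics. These statistics are quasi-local: only loops separating some of the marked points contribute nontrivially. Convergence of $\phi_n\to\phi$ plus RSW then gives convergence of all spin correlations, hence of $\mu_n$. For mixing, one expands $\mathbf 1_A$ and $\mathbf 1_B$ as linear combinations of spin products and reduces \eqref{eq:mixing} to factorisation of the $\phi$-expectations of the loop observables, which follows from the quantitative mixing of the critical random cluster measure together with the locality of $\rho$. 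This is the substitute for the conditional-independence step that your plan is missing.
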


An observable of interest in the six-vertex model is its \emph{height function} $h$. For now we consider it directly in the infinite volume limit as an integer-valued function defined on the faces of $\mathbb Z^2$.
We first chose a chessboard white and black coloring of the faces of $\mathbb Z^2$.
For reasons to become clear later, we set $h$ to be $\pm 1$ with probability $1/2$ on a chosen white face $u_0$ next to the origin of $\mathbb Z^2$. For any other face $u$ 
and a dual oriented path $\gamma$ connecting $u_0$ with $u$,
we denote by $h^{\gamma}_{\leftarrow}(u)$ and $h^{\gamma}_{\rightarrow}(u)$ the numbers of arrows in the 
underlying six-vertex configuration that cross $\gamma$ from right to left, and from left to right respectively.
The height at $u$ is then defined by
\begin{align} \label{eq:hf}
h(u)-h(u_0) = h^{\gamma}_{\leftarrow}(u) - h^{\gamma}_{\rightarrow}(u).
\end{align}
That the right-hand side is independent of $\gamma$ follows from the fact that $\mathbb Z^2$ is simply connected and from the property that six-vertex configurations form conservative flows.

It is predicted that the model should undergo a phase transition at $c=2$ in the sense that the variance of the height-function should be uniformly bounded 
(over all faces of $\mathbb Z^2$) for $c>2$ (the \emph{localized} regime), and should be unbounded for $c\leq 2$ (the \emph{delocalized} regime). 
So far this has been rigorously confirmed 
for $c>2$~\cite{Disc,GlaPel}, $c=2$~\cite{DCST,GlaPel}, $c=1$~\cite{She,CPST,LogVar}, the \emph{free fermion} point $c=\sqrt{2}$~\cite{Ken01} corresponding to the dimer model,
and a small neighborhood of $c=\sqrt{2}$~\cite{GMT}.
Moreover, logarithmic (in the distance to the origin) divergence of the variance was established in~\cite{DCST,GlaPel,LogVar}.
We note that a closely related result was recently proved also in the model of uniform Lipschitz functions on the triangular lattice~\cite{GlaMan}.
Finally, a much stronger property was obtained in~\cite{Ken01,GMT}, namely that the 
fluctuations of the height function in the scaling limit are described by the Gaussian free field. 
The following result adds to this list by identifying delocalization in the weak sense for all $c\in[\sqrt{2+\sqrt{2}},2]$.
\begin{theorem}[Delocalization of the height function] \label{thm:delocalization}
Let $c\in[\sqrt{2+\sqrt{2}},2]$. Then under the infinite volume measure~$\mu$ we have
\begin{align} \label{eq:delocalization}
\mathbf{Var}_{\mu} [h(u)] \to \infty \quad \text{ as } \quad |u| \to \infty,
\end{align}
where $u$ is a face of $\mathbb Z^2$. 
\end{theorem}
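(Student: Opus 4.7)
The plan is to combine the three ingredients announced in the abstract: the extension of the Baxter--Kelland--Wu (BKW) representation to multi-point correlations of the associated spin model, uniqueness and RSW-type crossing estimates for the critical random cluster measure at $q\in[1,4]$, and a recent criterion turning slow decay of spin correlations into delocalization of the height function.

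First I would use the extended BKW representation to express multi-point correlations of the $\pm 1$ spin model $\sigma$ (naturally associated on one of the two sublattices of $\mathbb Z^2$ with the critical random cluster measure at a suitable parameter $q=q(c)$ lying in $[1,4]$ when $c\in[\sqrt{2+\sqrt 2},2]$) as observables of the six-vertex model. Using the BKW height--spin coupling, spin configurations encode the parity of height differences, so slow decay of $\IE[\sigma_u\sigma_v]$ should translate, via the cited criterion, into non-trivial bulk fluctuations of $h(u)-h(u_0)$. Schematically, for a dual path $\gamma$ from $u_0$ to $u$,
\[
\mathbf{Var}_\mu[h(u)]\;\geq\;\sum_{v,w\in\gamma}(\text{sign factor})\,\IE[\sigma_v\sigma_w]-O(|\gamma|),
\]
once the BKW winding phases have been disposed of.

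Second, in this parameter regime the critical FK measure on $\mathbb Z^2$ is unique and admits RSW crossing estimates, yielding a polynomial lower bound $\IE[\sigma_u\sigma_v]\geq C|u-v|^{-\alpha}$ on spin correlations at criticality. The threshold $c=\sqrt{2+\sqrt 2}$ is expected to arise from matching the exponent $\alpha$ available through rigorous RSW-based arguments with the exact power required in the delocalization criterion from the recent literature: below this threshold the FK exponent is not (yet rigorously) known to be small enough for the associated sum to diverge, while above it the bound can be combined with the BKW formula to force $\mathbf{Var}_\mu[h(u)]\to\infty$ as $|u|\to\infty$, working under the infinite-volume measure $\mu$ constructed in Theorem~\ref{thm:existence}.

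The main obstacle is the first step. The classical BKW coupling is a partition-function identity carrying complex winding weights that only cancel after a global sum; extending it to localized multi-point correlations of $\sigma$ requires careful bookkeeping of how the loops in the BKW representation wind around the marked faces, and identifying the resulting signed expression with a genuine spin correlation of the critical FK model. In particular, one needs an identity of the form $\IE[\sigma_u\sigma_v]$ = (manifestly non-negative BKW quantity) rather than a merely signed one, so that it can be fed into the delocalization criterion. Once this extended representation is in place, the remaining steps follow from the general machinery for the critical random cluster measure at $q\in[1,4]$ (uniqueness, RSW, polynomial decay) together with the delocalization criterion referenced in the abstract.
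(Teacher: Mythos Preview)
Your proposal has the central mechanism backwards. You aim for a polynomial \emph{lower} bound on the spin two-point function and feed it into a variance formula; the paper does the opposite. The key identity is
\[
\mathbf E_{\mu}[\sigma(u)\sigma(u')]=\mathbf E_{\phi}\big[\rho^{N(u,u')}(-\rho)^{N(u',u)}\big],\qquad \rho=\tan\lambda,
\]
and for $c\in[\sqrt{2+\sqrt2},2]$ one has $\rho\le 1$, so RSW for the critical FK measure gives a polynomial \emph{upper} bound and hence $\mathbf E_{\mu}[\sigma(u)\sigma(u')]\to 0$. The threshold $c=\sqrt{2+\sqrt2}$ is not an exponent-matching condition at all: it is precisely the value at which $\rho=1$, above which the loop observable is trivially bounded by~$1$ and below which it grows and one would need to control cancellations.

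The route from spin decorrelation to delocalization is also different from your schematic variance inequality. The paper passes through an auxiliary bond percolation $\omega$ on $\mathbb Z^2_\bl$ (from~\cite{Lis19}) coupled to $\sigma$ by an Edwards--Sokal-type construction, for which $\mathbf P(u\leftrightarrow u'\text{ in }\omega)\le\mathbf E_{\mu}[\sigma(u)\sigma(u')]$. Decay of the right-hand side forces $\omega$ not to percolate (after establishing insertion tolerance and a Burton--Keane-type uniqueness), and the criterion of~\cite{Lis19} is that \emph{absence of percolation in $\omega$} implies $\mathbf{Var}_\mu[h(u)]\to\infty$. There is no step of the form ``sum of correlations along a path diverges''; your proposed inequality and the accompanying interpretation of the threshold do not reflect the actual argument.
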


We note that Theorem~\ref{thm:existence} and a stronger version of Theorem~\ref{thm:delocalization} yielding logarithmic 
divergence of the variance have been independently proved for all $c\in[1,2]$ in the work
of Duminil-Copin et al.~\cite{DKMO}. However, the methods that we use are different than those of~\cite{DKMO} and arguably more elementary. 
We also believe that the ideas presented in this paper will be useful in further analysis of 
the six-vertex model, in particular in questions regarding its scaling limit.

\subsection*{Outline of the approach}
Before explaining the arguments in detail, we give a brief overview of our approach.
We color the faces of $\T_{\n}$ and $\mathbb Z^2$ in a checkerboard manner. 
Let $\mathbb Z^2_{\bl}$ (resp.\ $\mathbb Z^2_{\wi}$) be the square lattice of side length $\sqrt2 $ rotated by $\frac{\pi}4$ whose vertices are the black (resp.\ white) faces of $\mathbb Z^2$, and 
where two vertices are adjacent if the corresponding faces of $\mathbb Z^2$ share a vertex (see Fig.~\ref{fig:BKW1}).
Let $\T_{\n}^{\bl}$ and $\T_{\n}^{\wi}$ be defined analogously for $\T_{\n}$.
The first main ingredient of the proofs of both main theorems is the Baxter--Kelland--Wu correspondence~\cite{BKW} between the six-vertex model on $\T_{\n}$ with parameter $c\in[\sqrt{3},2]$ 
and the critical random cluster model on $\T_{\n}^{\bl}$ 
with cluster parameter 
\begin{align} \label{eq:qc}
q=(c^2-2)^2\in [1,4].
\end{align}
For $c\in[\sqrt{3},2)$, unlike for $c\in [2,\infty)$,
this representation is not a stochastic, but rather a complex-measure coupling between the six-vertex and the (slightly modified) critical
random cluster model. For this reason it has not been clear how to transfer relevant probabilistic information between the two sides of this coupling. 
The main novelty of our approach is an extension of this correspondence to identities between correlation functions of certain observables.
These observables on the side of the six-vertex model are simply spins assigned to the faces of the lattice and given by 
\begin{align} \label{eq:sigmadef}
\sigma(u)=i^{h(u)},
\end{align}
where $i$ is the imaginary unit. Note that from a spin configuration one recovers the six-vertex configuration in a unique (and local) way, and hence the spins 
carry all the probabilistic information of the six-vertex model. 
Recall that our convention is to fix the height function to be $\pm1$ with equal probability on a white face $u_0$ adjacent to the origin. This makes the distribution of spins invariant under 
the sign change $\sigma \mapsto -\sigma$. 
Since the parity of the height function always changes between adjacent faces
the spins are real on the black, and imaginary on the white faces of $\mathbb Z^2$. 
Also note that $\sigma$ is always well-defined locally. 
Globally however, the height function may have a non-trivial period when one goes around the torus.
If the period is nontrivial mod $4$, the spin picks up a multiplicative term of $-1$ when going around the torus. This is a technical inconvenience that we discuss in more detail in 
the following sections.

To illustrate the type of identities between correlation functions obtained in this paper we briefly discuss here the simplest case of the two-point function.
Let~$\phi$ be the critical random cluster measure on the rotated lattice $\mathbb Z^2_{\bl}$ (see Fig.~\ref{fig:large}) with cluster parameter $q\in[1,4]$ as in \eqref{eq:qc}. The fact that this measure is unique, which was 
established by Duminil-Copin, Sidoravicius and Tassion in \cite{DCST}, is crucial and constitutes the second main ingredient of our proof of Theorem~\ref{thm:delocalization}. 
For two black faces $u,u' \in \mathbb {Z}^2_{\bl}$, we establish that
\begin{align} \label{eq:example}
\mathbf E_{\mu}[\sigma(u)\sigma(u')] = \mathbf E_{\phi}\big [\rho^{N(u,u')}(-\rho)^{N(u',u)}\big],
\end{align}
where 
\begin{align} \label{eq:lambda}
\rho = \tan \lambda, \qquad \text{ with } \qquad \sqrt q = 2 \cos \lambda,
\end{align}
and where $N(u,u')$ is the number of loops on $\mathbb Z^2$ that disconnect $u$ from $u'$ in the loop representation of the random cluster model. 
Analogous identities to \eqref{eq:example} for many-point correlation functions already on the finite level of 
$\T_{\n}$ are also established, and are the main tool to obtain the existence of the infinite-volume measure, and hence prove Theorem~\ref{thm:existence}.

To show delocalization of the height function as stated in Theorem~\ref{thm:delocalization}, we use a recent result of the author~\cite{Lis19}. We first establish decorrelation of spins saying that
\begin{align} \label{eq:decorrelation}
\mathbf E_{\mu}[\sigma(u)\sigma(u')] \to 0, \qquad \text{ as } \qquad |u-u'|\to \infty.
\end{align}
This in turn implies (up to technical details that are taken care of in the present article) that there is no percolation in the associated percolation model studied in~\cite{Lis19,GlaPel}. Finally, non-percolation was shown in~\cite{Lis19} to imply delocalization.

It is clear that identity \eqref{eq:example} is useful for proving such decorrelation of spins \eqref{eq:decorrelation}. Indeed, for $c\in (\sqrt{2+\sqrt 2},2]$, we have that $0\leq \rho<1$.
On the other hand, by the results of \cite{DCST} we know that $N(u,u')\to \infty$ as $|u-u'|\to \infty$ $\phi$-almost surely. Hence, by~\eqref{eq:example} the spins decorrelate and the height function delocalizes.
The peripheral case $c=\sqrt{2+\sqrt 2}$ corresponding to $q=2$ and $\rho =1$ requires a slightly different argument.

To finish this discussion we note that Theorem~1.1 implies decorrelation of local increments of the height function for $c\in[\sqrt 3,2]$. 
To prove delocalization however, we need additional information on the global 
increment between two far-away points $u,u' \in \mathbb {Z}^2_{\bl}$. 
From the results of \cite{Lis19}, it turns out that the sufficient information is the behaviour of the parity of $({h(u)-h(u')})/2$. This is provided by~\eqref{eq:decorrelation} since for a black face $u'$, $h(u')$ is even, and hence
\[
\sigma(u)\sigma(u')= (-1)^{({h(u)-h(u')})/2}.
\]

A natural question that remains is if our approach to prove delocalization extends to the case $\rho =\tan \lambda>1$, or equivalently $c\in [\sqrt 3, \sqrt{2+\sqrt 2})$, (by possibly studying a different observable than \eqref{eq:decorrelation} to obtain delocalization).
In the rest of the paper we provide the precise statements and the remaining necessary details for the proofs of our results.

\subsection*{Acknowledgments} I would like to thank Nathana\"{e}l Berestycki for stimulating discussions and his insight into Lemma~\ref{lem:loopcorrelationslimit}, and Alexander Glazman for 
many valuable discussions.
\begin{figure}
\begin{center}
 \includegraphics[scale=0.9]{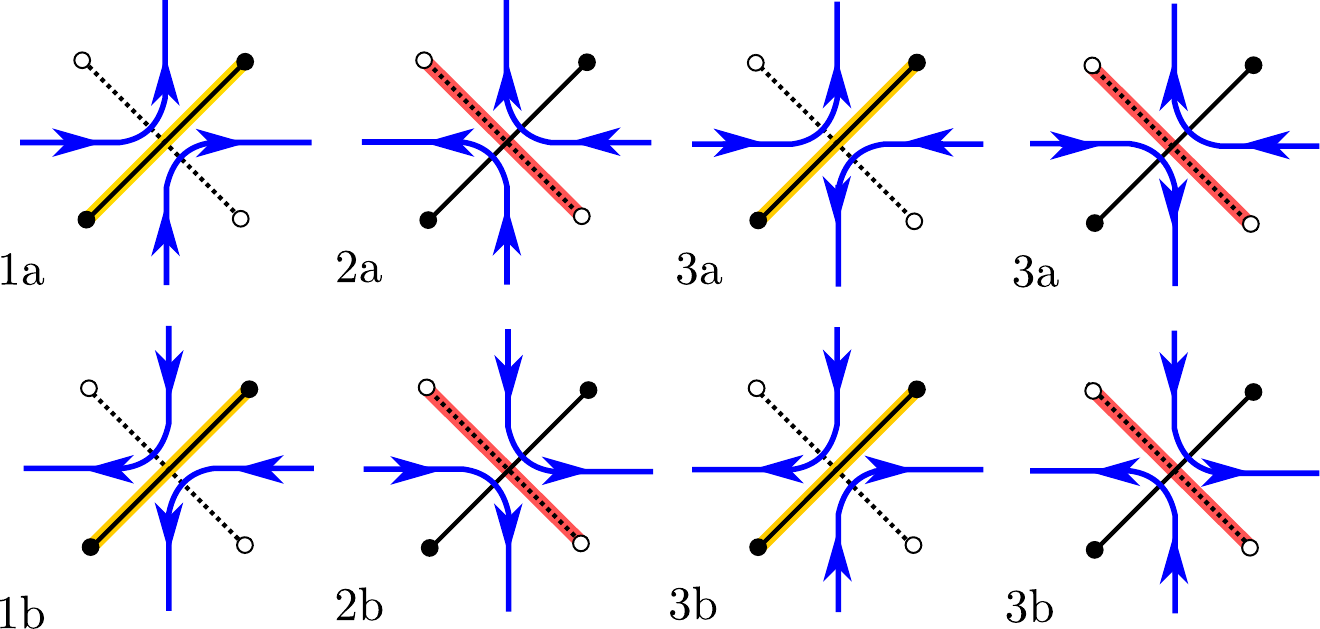}
\caption{Eight possible ways the loops connect at a vertex.
The yellow and red edges are the open edges in the percolation configuration $\xi$ and $\xi^{\dagger}$ respectively. }
 \label{fig:BKW}
\end{center}
\end{figure}

\section{The Baxter--Kelland--Wu representation of spin correlations}
The Baxter--Kelland--Wu (BKW) correspondence is the starting point of our argument. 
We recall it here while simultaneously establishing closely related identities for correlations of the $\sigma$-spins.

The first step is to represent the arrow configurations on $\T_{\n}$ 
as \emph{fully packed} configurations of directed and noncrossing loops $\vec { L}$, also on $\T_{\n}$. The term fully packed means that each edge of $\T_{\n}$ is 
traversed exactly once by a loop from~$\vec{ L} $. The loops in $\vec L$ should follow the arrows of the arrow configuration~$\alpha$, and the only choice remaining is to
decide how the directed edges connect at each vertex to form noncrossing loops. 
For configurations of type 1 and 2, there is no choice, whereas for configurations of type 3 we can choose two different types of connections, 
see Fig.~\ref{fig:BKW}. On the other hand, to reverse the map in order to obtain $\alpha$ from $\vec L$, it is enough to keep the information about the orientation of each edge and otherwise forget how the loops connect at the vertices. 
This gives a many-to-one map from $\vec \LL_{\n}$, defined to be the set of all fully packed oriented loop configurations, to $\OO_{\n}$ -- the set of all arrow configurations on $\T_{\n}$.

The crucial idea now is to parametrize the six-vertex weights in terms of the types of turns the loops make at each vertex. To this end, we define the weight of an oriented loop configuration by
\begin{align} \label{eq:loopweight}
w(\vec L) = e^{\frac{i\lambda}4 (\textnormal{left}(\vec L)-\textnormal{right}(\vec L))},
\end{align}
where $\lambda$ is as in \eqref{eq:lambda}, and where $\textnormal{left}(\vec L)$ and $\textnormal{right}(\vec L)$ are the total numbers of left and right turns of all the loops in the configuration.
Note that at each vertex of type 1 or 2, the loops make turns in opposite directions and hence the joint contribution of these two turns to the weight is $1$.
On the other hand, for vertices of type 3, we either have two turns left or two turns right which yields a total weight $2\cos \tfrac{\lambda}2=c$.
This exactly means that after projecting the renormalized complex measure on 
$\vec \LL_{\n}$ induced from the weight~\eqref{eq:loopweight} onto arrow configurations~$\alpha$ (i.e., summing over all oriented loop configurations corresponding to $\alpha$) we recover the six-vertex probability measure $\mu_{\n}$.

The next step of the correspondence is to go from an oriented loop configuration $\vec L$ to an unoriented one by simply forgetting the orientations of the loops. 
To this end, note that after reorganizing the factors in \eqref{eq:loopweight} 
according to which turn is made by which loop, we obtain that
\begin{align} \label{eq:loopweight1}
w(\vec L) =   \prod_{\vec \ell\in \vec L}  e^{\frac{i\lambda}4 (\textnormal{left}(\vec \ell)-\textnormal{right}(\vec \ell))}= \prod_{\vec \ell\in \vec L} e^{{i\lambda}\wind(\vec \ell)},
\end{align}
where $\textnormal{left}(\vec \ell)$ and $\textnormal{right}(\vec \ell)$ are the total numbers of left and right turns of a single oriented loop $\vec\ell$, 
and where $\wind(\vec \ell)$ is the total \emph{winding number} of the loop. 
The important observation here is that if $\vec \ell$ is contractible on the underlying torus, then $\wind(\vec \ell)=\pm 1$ depending on the counterclockwise or clockwise orientation of the loop, 
and $\wind(\vec \ell)=0$
if $\vec \ell$ is noncontractible. By an unoriented loop configuration $L$ we mean a fully packed configuration of noncrossing loops obtained from some $\vec L\in \vec  \LL_{\n}$ 
by erasing all arrows from the edges.
From \eqref{eq:loopweight1} we can conclude that the weights $w(\vec L)$ induce a probability measure $\phi_\n$ on the set of fully-packed unoriented loop configurations $\LL_{\n}$ given by
\begin{align} \label{eq:nu}
\phi_\n( L) =  \frac{1}{Z_{\n}} \prod_{ \ell\in  L}  ( e^{{i\lambda}\wind(\vec \ell)}+ e^{-{i\lambda}\wind(\vec \ell)}) =  \frac{1}{Z_{\n}}\sqrt{q}^{|L|} \big(\tfrac{2}{\sqrt q}\big)^{|L_{\textnormal{nctr}}|},
\end{align}
where \[
Z_\n = \sum_{\alpha \in \mathcal O_\n} c^{N(\alpha)}
\]
is the partition function of the six-vertex model, and $L_{\textnormal{nctr}}$ is the set of noncontractible loops in $L$.

\begin{figure}
\begin{center}
 \includegraphics[scale=0.62]{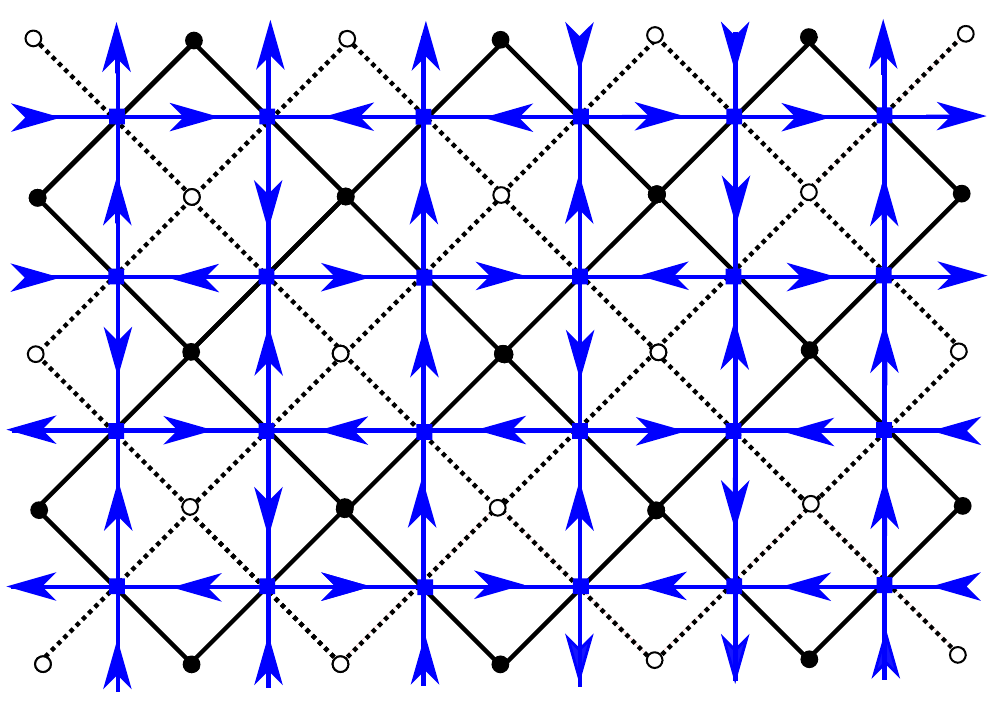} \hspace{0.1cm} \includegraphics[scale=0.62]{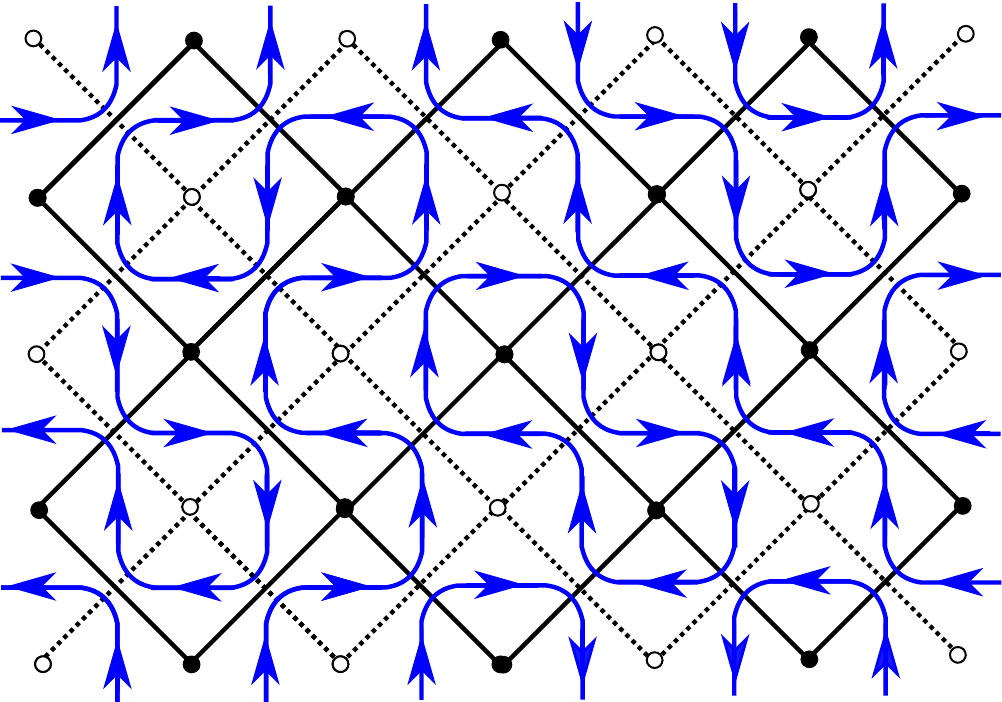} 
\caption{An arrow configuration on a $6\times 4$ torus and a corresponding fully packed configuration of directed loops }
 \label{fig:BKW1}
\end{center}
\end{figure}

Before discussing the connection with the random cluster model, let us derive the necessary formulas for the spin correlations as expectations of certain loop statistics under $\phi_{\n}$. 
As already mentioned, one needs to take slightly more care when defining the height function $h$ and hence the spins~$\sigma$
in finite volume as one may pick up a nontrivial period of $-1$ when going around the torus. To circumvent this obstacle, we identify the vertices of $\T_{\n}$ with those of the box
\begin{align} \label{eq:box}
\{-nj_1+1,\ldots,nj_1\}\times \{-nj_2+1,\ldots,nj_2\} \subset \mathbb Z^2.
\end{align}
As before the height function at $u_0$ is chosen to be $\pm 1$ with equal probability. For every other face, we use formula \eqref{eq:hf} with the 
restriction that the path $\gamma$ cannot take a step from a vertex with the $i$-th coordinate equal to $nj_i$ to a vertex with the same coordinate equal to $-nj_i+1$ and vice versa, for $i=1,2$.

Let $u_1,\ldots,u_{p}\in \mathbb T^{\bl}_\n$ and $v_1,\ldots,v_{r}  \in \mathbb T^{\wi}_\n$ be
black and white faces of $\T_{\n}$ respectively. 
We are interested in the the correlation function
\[
\mathbf E_{\mu_{\n}} \Big[\prod_{i=1}^{p}\sigma(u_i) \prod_{j=1}^{r}\sigma(v_j) \Big].
\] 
One can see that if either $p$ or $r$ is odd, then this expectation is zero by symmetry. Indeed, first note that after fixing spins on one sublattice and reversing all arrows, 
the spins on the other sublattice change sign.
Furthermore the six-vertex model is invariant under arrow reversal, and we also choose the distribution of $\sigma(u_0)$ to be symmetric.
Hence, we can assume that $p=2k$ and $r=2l$. 

We chose half of the faces $u_1,\ldots,u_{2k},v_1,\ldots,v_{2l}$ and declare them \emph{sources}, and we call the remaining half \emph{sinks}.
We now fix $k+l$ directed paths in the dual of $\T_{\n}$ that connect pairwise the sources to the sinks, and 
define $\Gamma$ to be the collection (sometimes called a \emph{zipper}) of directed edges of $\T_{\n}$ which cross these paths from right to left, see Fig.~\ref{fig:zipper}.
It is possible that one edge crosses multiple paths. We also define $s^{+}_i$ and $s^-_i$ to be the source and sink of path number $i$ respectively for $1\leq i\leq k+l$.

\begin{figure}
\begin{center}
 \includegraphics[scale=0.9]{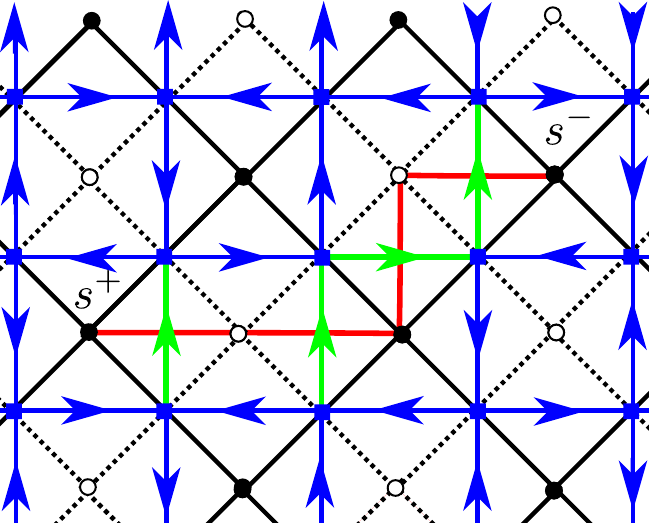} 
\caption{The directed path (red) with source $s^+$ and sink $s^-$. The green edges represent (up to arrow reversal) the zipper $\Gamma$ used in the computation of $\mathbf{ E}_{\mu_{\n} }[\sigma(s^+)\sigma(s^-)]$ 
(the actual edges in the zipper cross the path from right to left).
For this arrow configuration $\alpha$, we have $\epsilon(\alpha)=\sigma(s^+)\sigma(s^-)=-1$}
 \label{fig:zipper}
\end{center}
\end{figure}

Having fixed~$\Gamma$, for any other collection of directed edges $H$, we define 
\begin{align} \label{eq:epsilon}
 \epsilon(H)=i^{|\Gamma\cap H|}(-i)^{|\Gamma\cap (-H)|} \in \{ 1,i,-1,-i \},
\end{align}
where $-H$ is the set of all reversed edges from $H$. 
Note that 
\begin{align} \label{eq:reverse}
 \epsilon(H) \epsilon(-H)=1.
 \end{align}
Below, with a slight abuse of notation, we will identify six-vertex configurations~$\alpha$, 
oriented loop configurations $\vec L$, and single oriented loops $\vec \ell$ with the naturally associated collections of directed edges.
Recall that by our convention, $\sigma(u_i)^2=1$ and $\sigma(v_j)^2=-1$, or in other words
\[
i^{h(u_i)}=i^{-h(u_i)} \quad \textnormal{ and } \quad i^{h(v_j)}=-i^{-h(v_j)}.
\] 
Writing $s$ for the number of white sinks, we have
\begingroup
\allowdisplaybreaks
\begin{align}
(-1)^s \mathbf E_{\mu_{\n}} \Big[\prod_{i=1}^{2k}\sigma(u_i) \prod_{j=1}^{2l}\sigma(v_j) \Big]&= (-1)^s \mathbf E_{\mu_{\n}} \Big[i^{{\sum_{i=1}^{2k}h(u_i)+ \sum_{j=1}^{2l}h(v_j) }}\Big] \nonumber\\
&=  \mathbf E_{\mu_{\n}} \Big[i^{\sum_{i=1}^{k+l}(h(s^+_{i})-h(s^-_i)) }\Big]\nonumber \\
&=   \mathbf E_{\mu_{\n}} [\epsilon (\alpha)]\nonumber \\
&= \frac 1{Z_{\n}} \sum_{\vec L \in \vec \LL}  \epsilon(\vec L) w(\vec L)\nonumber\\
&=  \frac 1{Z_{\n}} \sum_{\vec L \in \vec \LL}  \prod_{\vec\ell \in \vec L} e^{{i\lambda}\wind(\vec \ell)} \epsilon (\vec\ell)\nonumber\\
&=  \frac 1{Z_{\n}} \sum_{ L \in  \LL} \big( \prod_{\ell \in  L} \rho(\ell)\big) \sqrt{q}^{|L|} \big(\tfrac{2}{\sqrt q}\big)^{|L_{\textnormal{nctr}}|} \nonumber,
\end{align}
where 
\begin{align} \label{def:rho}
\rho(\ell)=\frac{e^{{i\lambda}\wind(\vec \ell)} \epsilon (\vec\ell) +e^{{i\lambda}\wind(-\vec \ell)} \epsilon (-\vec\ell)}
{e^{{i\lambda}\wind(\vec \ell)} +e^{{i\lambda}\wind(-\vec \ell)}},
\end{align}
\endgroup
with $\vec \ell$ and $-\vec \ell$ being the two oriented versions of $\ell$. 
To get the third equality we represented each of the increments of the height function in the second line as a sum of one-step increments along the fixed paths, 
and then used the definitions of $h$~\eqref{eq:hf} and $\epsilon$~\eqref{eq:epsilon}. To obtain the fourth identity we followed the same reasoning as in the 
standard BKW representation. This can be done since the observable $\epsilon$ depends only on the orientations of the arrows, and this information is preserved when going form $\alpha$ to $\vec L$.
The last equality follows by forgetting the orientations of the loops as we did in \eqref{eq:nu}.

As a consequence we get the following crucial identity for correlation functions.
\begin{lemma} \label{lem:loopcorellations} Let $u_1,\ldots, u_{2k} \in \mathbb T^{\bl}_\n$ and $v_1,\ldots, v_{2l} \in \mathbb T^{\wi}_\n$ 
be black and white faces of $\mathbb T_\n$ respectively, and let $\rho$ is defined in~\eqref{def:rho} and $s$ is the number of white sinks. Then
\[
\mathbf E_{\mu_{\n}} \Big[\prod_{i=1}^{2k}\sigma(u_i) \prod_{j=1}^{2l}\sigma(v_j) \Big] = (-1)^{s}\mathbf E_{\phi_{\n}} \Big[\prod_{\ell\in L} \rho(\ell)\Big].
\]
\end{lemma}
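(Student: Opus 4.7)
The plan is to run the chain of equalities already foreshadowed after the definition of the zipper, being a bit more explicit about why each step is justified. The first move is to convert the spin correlation into a height-function correlation via $\sigma(u) = i^{h(u)}$. Since $h$ is even on black faces and odd on white faces, $\sigma(u_i)^2 = 1$ while $\sigma(v_j)^2 = -1$, so after pairing the $2k+2l$ marked faces into $k+l$ source/sink pairs each white sink produces an extra sign. This yields the factor $(-1)^s$ together with the expectation of $i^{\sum_i (h(s_i^+)-h(s_i^-))}$. The vanishing statement in the odd case is handled separately by the symmetry $\sigma \mapsto -\sigma$ coming from the symmetric choice of $h(u_0) = \pm 1$ and arrow-reversal invariance of $\mu_\n$.

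The second step is to realize each height increment $h(s_i^+) - h(s_i^-)$ as a signed crossing count of the arrow configuration along a fixed dual path $\gamma_i$, using the box identification~\eqref{eq:box} that makes $h$ single-valued (this absorbs the technical torus issue). Summing over $i$ and collecting the crossed edges into the zipper $\Gamma$ then yields $i^{\sum_i (h(s_i^+)-h(s_i^-))} = \epsilon(\alpha)$ by the definition~\eqref{eq:epsilon}. The correlation thus reduces to $Z_\n^{-1} \sum_\alpha \epsilon(\alpha) c^{N(\alpha)}$.

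The third step is the BKW lift: sum instead over oriented fully-packed loop configurations $\vec L \in \vec \LL_\n$ with weights $w(\vec L)$. Because $\epsilon$ depends only on edge orientations, and the oriented-loop lift preserves these orientations, $\epsilon(\alpha) = \epsilon(\vec L)$. Both observables then factor over loops: by~\eqref{eq:loopweight1} the weight is $w(\vec L) = \prod_{\vec\ell} e^{i\lambda \wind(\vec\ell)}$, and since the directed edges of distinct loops are disjoint, $\epsilon(\vec L) = \prod_{\vec\ell} \epsilon(\vec\ell)$. Finally, for each unoriented configuration $L$ the sum over the $2^{|L|}$ orientations factorizes across loops; for a single $\ell$ with orientations $\vec\ell$ and $-\vec\ell$, the sum $\sum_{\pm} e^{i\lambda\wind(\pm\vec\ell)}\epsilon(\pm\vec\ell)$ divided by the analogous sum without $\epsilon$ is, by definition, $\rho(\ell)$, while the denominators together with the product over loops reconstruct the unoriented loop measure $\phi_\n$ from~\eqref{eq:nu}. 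Putting everything together gives the claimed identity.

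The main obstacle is the careful bookkeeping on the torus: the height function only exists on the box~\eqref{eq:box}, so one must verify that the chosen dual paths and the resulting $\epsilon$ are compatible with this definition, and that $\epsilon(\vec L) = \prod_{\vec\ell} \epsilon(\vec\ell)$ remains valid in the presence of noncontractible loops. The latter is immediate from~\eqref{eq:epsilon} since $\epsilon$ is multiplicative in disjoint directed-edge sets, but one should check that reorienting a loop is consistent with~\eqref{eq:reverse}, and that a different choice of the paths $\gamma_i$ (and hence of $\Gamma$) leaves the final expectation unchanged, as required for the right-hand side to be intrinsically defined.
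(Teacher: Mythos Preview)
Your proposal is correct and follows essentially the same chain of equalities that the paper establishes immediately before stating the lemma: rewrite the spin product as $i$ raised to a sum of heights, use the parity of $h$ on black versus white faces to flip each sink's sign (producing the factor $(-1)^s$) and obtain $\epsilon(\alpha)$ via the zipper, lift to oriented loops by BKW, factor $w(\vec L)\epsilon(\vec L)$ over loops, and finally sum over orientations to recognize $\rho(\ell)$ and the measure $\phi_\n$. Your added remarks on the multiplicativity of $\epsilon$ over disjoint directed-edge sets and on the independence from the choice of paths are valid elaborations of points the paper leaves implicit.
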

We note for future reference that the same formula can be obtained when the numbers of white and black faces are both odd. As discussed before, both correlations are then equal to zero.
Also observe that on the side of the six-vertex model the correlations involve observables that are local functions of the spins, whereas the loop observables 
depend on the global topology of all loops. Hence, slightly more care will be required when talking about convergence of the correlations under $\phi_{\n}$ as $\n\to \infty$, which is the subject of the next section.

Useful in this analysis will be the following interpretation of $\rho(\ell)$ for contractible loops.
For a contractible loop $\ell$, let $\delta(\ell)$ be the number of sources minus the number of sinks enclosed by the loop. Then
\begin{align} \label{eq:rho}
\rho(\ell) = \begin{cases}
1& \textnormal{ if $\delta(\ell) =0 \textnormal{ mod } 4$}, \\
-\tan \lambda &\textnormal{ if $\delta(\ell) =1 \textnormal{ mod } 4$}, \\
-1 &\textnormal{ if $\delta(\ell) =2 \textnormal{ mod } 4$}, \\
\tan \lambda &\textnormal{ if $\delta(\ell) =3 \textnormal{ mod } 4$}. 
\end{cases}
\end{align}
This can be obtained from~\eqref{def:rho} by computing the total flux of all the fixed directed paths through~$\ell$ (the number of times the paths cross $\ell$ from the inside to the outside minus 
the number of crossings from the outside to the inside). For topological reasons, this number is independent of the particular choice of paths, and is equal to~$\delta(\ell)$.
In particular if a contractible loop $\ell$ encloses all the sources and sinks, or none of them, then $\delta(\ell)=0$ and $\rho(\ell)=1$.
Moreover, if $\ell$ is noncontractible, then $\rho( \ell)=\epsilon(\vec \ell)$ if $\epsilon(\vec \ell)$ is real, and $\rho( \ell)=0$ otherwise. Here, $\vec \ell$ is any of the two orientations of $\ell$.
This means that for any $\Lambda \subset \mathbb T_\n$,
\begin{align} \label{eq:quasilocal}
\prod_{\ell\in L} \rho(\ell) \mathbf 1_{T_\Lambda}(L)=\prod_{\ell \in L \cap \Lambda} \rho(\ell)\mathbf 1_{T_\Lambda}(L),
\end{align} where, with a slight abuse of notation, $L\cap \Lambda$ are the loops from $L$ that are contained in $\Lambda$, and where $T_\Lambda$ is the set of loop configurations $L$ such that $L\cap \Lambda$ contains a contractible loop surrounding all the sources and sinks. 
Indeed, if $L\in T_\Lambda$, then for topological reasons,
\begin{itemize} 
\item any other contractible 
loop $\ell \in L$ that is not contained in $\Lambda$, either surrounds no sinks and no sources, or surrounds all of them, and hence by \eqref{eq:rho}, $\rho(\ell)=1$.
\item any noncontractible loop $\ell$ satisfies $\rho( \ell)=\epsilon(\vec \ell)=1$.
\end{itemize}
This is a form of \emph{locality} of the loop observables that we will later use to 
conclude convergence of their expectations in the infinite volume limit.

To finally make a connection with the random cluster model we follow Baxter, Kelland and Wu, and interpret the unoriented loops
as interfaces winding between clusters of open edges in a bond percolation configuration and its dual configuration
\[
\xi\in \Omega^{\bl}_\n=\{ 0,1\}^{E(\T^{\bl}_{\n})} \qquad \text{and} \qquad  \xi^{\dagger}\in \Omega^{\wi}_\n= \{ 0,1\}^{E(\T^{\wi}_{\n})}
\]
respectively (see~Fig.\ref{fig:BKW} and Fig.\ref{fig:large}). This yields a bijection between $\LL_{\n}$ and $\Omega^\bl_\n$, 
and we will write $L(\xi)$ for the unoriented loop configuration corresponding to $\xi$ under this map.
It turns out that the distribution of $\xi$ defined by formula~\eqref{eq:nu} is very closely related to the one of the \emph{critical random cluster model} which on the torus is given, up to normalizing constants, by 
\begin{align*} 
\phi_{\n}^{\textnormal{rc}}(\xi)  \propto q^{k(\xi)} \big(\tfrac{p_c}{1-p_c} \big)^{|\xi|} = q^{k(\xi)} \sqrt{q}^{|\xi|},
\end{align*}
where $p_c=\sqrt{q}/(1+\sqrt{q})$ is the critical parameter~\cite{BefDum}, and $k(\xi)$ is the number of connected component of $\xi$ (including isolated vertices) thought of as a subgraph of $\T^{\bl}_{\n}$.
Indeed, using Euler's formula for graphs drawn on the torus (see e.g.\ Lemma 3.9 in~\cite{Disc}), one can rewrite this as
\begin{align} \label{eq:nurc}
\phi_{\n}^{\textnormal{rc}}(\xi)  \propto \sqrt{q}^{|L(\xi)|}q^{s(\xi)}  ,
\end{align}
where
\[s(\xi)=
\begin{cases}
1 & \text{if $\xi$ is a net}, \\
0 & \text{otherwise}.
\end{cases}
\] 
Here, a \emph{net} is a subgraph of $\T_{\n}^{\bl}$ that contains two noncontractible cycles of different homotopy class.

\begin{figure}
\begin{center}
 \includegraphics[scale=0.7]{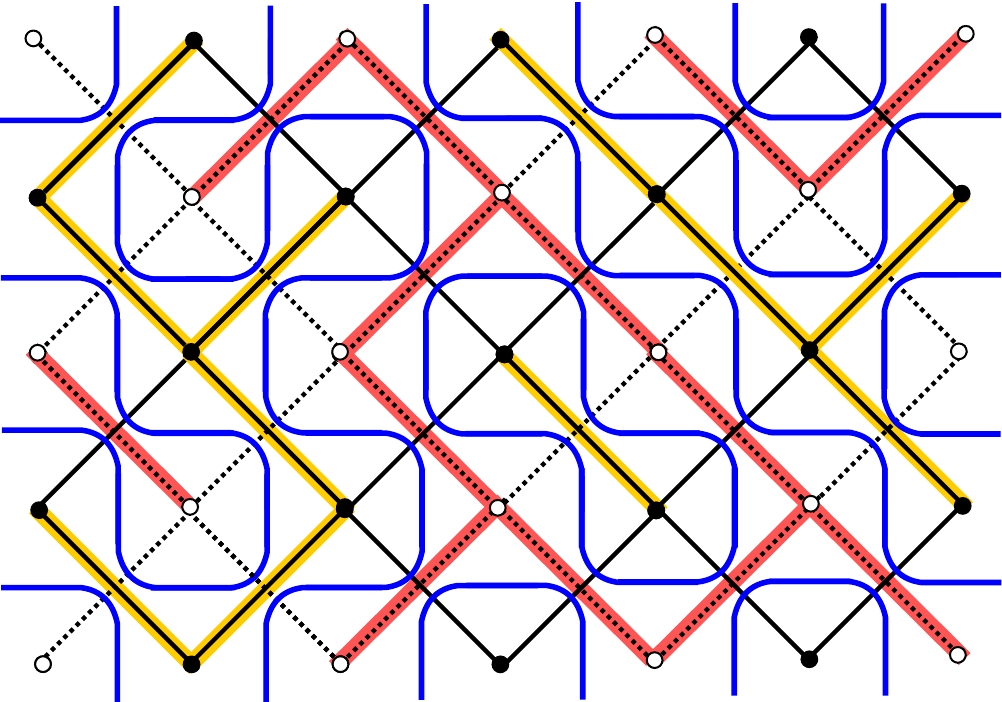}
\caption{
An unoriented loop configuration on a $6\times 4$ torus and the corresponding bond percolation configurations $\xi$ (yellow) and $\xi^{\dagger}$ (red).
The solid (resp.\ dotted) black lines represent the edges of $\T_{\n}^{\bl}$ (resp.\ $\T_{\n}^{\wi}$)
}
 \label{fig:large}
\end{center}
\end{figure}

\section{The infinite volume limit}
In this section we discuss convergence of finite volume measures as $\n\to \infty$ and as a result we prove Theorem~\ref{thm:existence}.
The main tools are the formulas for spin correlations from the previous section and the results on the critical random cluster model with $q\in[1,4]$ of Duminil-Copin, Sidoravicius and Tassion~\cite{DCST}.

\subsection{Convergence of $\phi_{\n}$}
Let $\Omega_\bl=\{0,1\}^{E(\mathbb Z_\bl^2)}$ and let $\mathcal F$ be the product $\sigma$-algebra on $\Omega_\bl$. 
Recall that $\Omega_\bl$ with the product discrete topology is a compact space for which $\mathcal F$ is the Borel $\sigma$-algebra. 

In what follows we think of $\Omega^\bl_\n$ as a subset of $\Omega_\bl$ by cutting the torus $\mathbb T_\n$ along the two noncontractible cycles at graph distance 
$nj_1$ in the horizontal and $nj_2$ in the vertical direction from the origin (as it was done in \eqref{eq:box}), and 
extending each percolation configuration~$\xi \in \Omega^\bl_\n$ to by setting its values to zero on the edges outside the resulting (rotated) box in $\mathbb Z^2_\bl$.
In particular, we think of $\phi_{\n}$ as a measure on~$(\Omega_\bl,\mathcal F)$.

The weak convergence of $\phi_{\n}$ to~$\phi$ will be a consequence of the fact that~$\phi$ is the unique critical random 
cluster measure on~$\mathbb Z_\bl^2$~\cite{DCST}, and the close relationship between formulas~\eqref{eq:nu} and~\eqref{eq:nurc}. 

\begin{lemma} \label{lem:phiconvergence}
For $c\in [\sqrt 3,2]$, $\phi_\n$ converges weakly to $\phi$ as $\n\to \infty$.
\end{lemma}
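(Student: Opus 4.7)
The strategy is to leverage the convergence $\phi_{\n}^{\textnormal{rc}} \to \phi$ (provided by the uniqueness theorem of~\cite{DCST} for the critical random-cluster measure), and to control the discrepancy between $\phi_\n$ and $\phi_{\n}^{\textnormal{rc}}$ coming from the formulas \eqref{eq:nu} and \eqref{eq:nurc}.

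First, combining \eqref{eq:nu} with the loop-to-percolation bijection $L\leftrightarrow \xi$ and with the rewriting \eqref{eq:nurc}, I would obtain the Radon--Nikodym derivative
\begin{align*}
\frac{d\phi_{\n}}{d\phi_{\n}^{\textnormal{rc}}}(\xi) \;=\; \frac{Z_\n^{\textnormal{rc}}}{Z_\n}\cdot\frac{(2/\sqrt{q})^{|L_{\textnormal{nctr}}(\xi)|}}{q^{s(\xi)}}\;=:\;C_\n\, g(\xi).
\end{align*}
The crucial structural observation is that $g(\xi)$ depends on $\xi$ only through its \emph{global topological data}: the number of noncontractible loops of $L(\xi)$ and whether $\xi$ is a net. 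In particular, on the event that $\xi$ has neither a noncontractible primal nor dual cycle, one has $g(\xi)=1$, and the two measures differ only by the overall normalization.

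Next, I would reduce the claim to two ingredients concerning this topological factor under $\phi_{\n}^{\textnormal{rc}}$:
\begin{enumerate}
\item[(i)] (\emph{Tight reweighting}) The quantity $g(\xi)$ has uniformly bounded expectation, and is bounded away from $0$ in expectation, as $\n\to\infty$.
\item[(ii)] (\emph{Asymptotic independence}) For every local event $A$ depending only on finitely many edges,
$\mathbf{E}_{\phi_{\n}^{\textnormal{rc}}}[\mathbf{1}_A\, g(\xi)] = \phi_{\n}^{\textnormal{rc}}(A)\cdot\mathbf{E}_{\phi_{\n}^{\textnormal{rc}}}[g(\xi)]+o(1)$.
\end{enumerate}
Item (i) would rely on the fact that $\phi$ has no infinite cluster for $q\in[1,4]$, together with the RSW-type estimates of \cite{DCST}, which yield cluster-diameter bounds strong enough to control the expected number of noncontractible clusters on $\T_\n^\bl$ (and hence of noncontractible interfaces, since each noncontractible primal or dual cluster carries only $O(1)$ parallel noncontractible loop-interfaces of a single homotopy class, and only finitely many homotopy classes can coexist at once). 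Item (ii) would follow from the mixing of $\phi$ (ergodicity with polynomial mixing rate from the RSW bounds of \cite{DCST}): the event $A$ is local while $|L_{\textnormal{nctr}}|$ and $s(\xi)$ are measurable with respect to arbitrarily far edges, so their joint law decouples in the limit.

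Combining (i), (ii), and $\phi_{\n}^{\textnormal{rc}}\to \phi$, I conclude, for any local event $A$,
\begin{align*}
\phi_{\n}(A) \;=\; \frac{\mathbf{E}_{\phi_{\n}^{\textnormal{rc}}}[\mathbf{1}_A\, g(\xi)]}{\mathbf{E}_{\phi_{\n}^{\textnormal{rc}}}[g(\xi)]} \;=\; \phi_{\n}^{\textnormal{rc}}(A) + o(1) \;\longrightarrow\; \phi(A),
\end{align*}
which is precisely the required weak convergence on the compact space $\Omega_\bl$.

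\textbf{Main obstacle.} The hard part is the quantitative coupling of a local event with the global topological data in (ii), because the factor $(2/\sqrt{q})^{|L_{\textnormal{nctr}}|}$ is \emph{a priori} exponentially large in a quantity that is sensitive to the macroscopic structure of $\xi$. The integrability in (i) is therefore not a cosmetic assumption but the technical crux: it requires a genuine tail bound on the number of noncontractible clusters in $\T_\n^\bl$, which must be extracted from the $\mathbb{Z}^2$-estimates of \cite{DCST} by transferring RSW from boxes with prescribed boundary conditions to the torus. Once exponential (or sufficiently high polynomial) moment control of $|L_{\textnormal{nctr}}|$ is in place, the mixing needed for (ii) is standard.
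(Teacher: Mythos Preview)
Your route is genuinely different from the paper's, and it has a real gap at the step you yourself flag. Item (i) --- the uniform bound $\sup_n \mathbf{E}_{\phi_n^{\textnormal{rc}}}[(2/\sqrt q)^{|L_{\textnormal{nctr}}|}]<\infty$ --- is an exponential-moment estimate on a global topological count, with base as large as $2$ when $q$ is near $1$. The RSW/arm estimates of \cite{DCST} give polynomial decay of one-arm probabilities, which is far from what is needed here; turning that into an exponential tail for $|L_{\textnormal{nctr}}|$ on the torus would require a separate and nontrivial argument that you have not supplied. Your item (ii) is also misargued: $|L_{\textnormal{nctr}}|$ is \emph{not} ``measurable with respect to arbitrarily far edges'' --- flipping one edge in the support of $A$ can merge two noncontractible interfaces into a single contractible one. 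The actual decoupling mechanism is that $|L_{\textnormal{nctr}}|$ is frozen once a contractible loop surrounds the support of $A$; but then controlling $\mathbf{E}_{\phi_n^{\textnormal{rc}}}[g\,\mathbf 1_{\{\textnormal{no surrounding loop}\}}]$ again requires the integrability from (i) or stronger, so (ii) is not ``standard'' independently of (i). Finally, the input $\phi_n^{\textnormal{rc}}\to\phi$ is not literally contained in \cite{DCST} (which proves uniqueness of infinite-volume DLR measures, not convergence of torus measures) and itself needs a DLR-verification step handling the $q^{s(\xi)}$ factor in \eqref{eq:nurc}.

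The paper's proof sidesteps all of this with a soft argument: pass to an arbitrary subsequential limit $\phi_0$; finite energy is inherited from \eqref{eq:nu}, so Burton--Keane (applied to both $\xi$ and $\xi^\dagger$) yields at most one biinfinite interface under $\phi_0$. On the resulting high-probability event $S_N$ that at most two interface-arms cross a large annulus around a finite box $\Lambda$, changing edges inside $\Lambda$ cannot alter $|L_{\textnormal{nctr}}|$, and the conditional law on $\Lambda$ reduces to \eqref{eq:bcrc}. This is exactly the DLR condition, and uniqueness from \cite{DCST} forces $\phi_0=\phi$. No moment estimate on $|L_{\textnormal{nctr}}|$ is ever required.
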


Before proving the result we recall some classical definitions. To this end, for $E\subset E(\mathbb Z_\bl^2)$, let $\mathcal F_E\subset \mathcal F$ be the $\sigma$-algebra generated by the 
states of the edges in~$E$. Also define $E^c = E(\mathbb Z_\bl^2) \setminus E$.
We say that a probability measure $\phi_0$ on~$\Space_\bl$ is \emph{insertion}
\emph{tolerant} if there exists $\epsilon>0$ such that for every $e\in E(\mathbb Z_\bl^2)$ and 
every event $A \in \mathcal F_{ \{e\}^c} $ of positive measure, we have
\begin{align} \label{eq:insertion}
\phi_0 (\xi(e) =1 \mid A) \geq \epsilon.
\end{align}
We say that $\phi_0$ is \emph{deletion tolerant} if the law of $1-\xi$ is insertion tolerant. Finally,~$\nu$ has \emph{finite energy} if it is both insertion and deletion tolerant.
A result that we will use is the classical Burton--Keane~\cite{BK} 
theorem saying that the probability of seeing more than one infinite cluster is zero under any 
translation invariant probability measure with finite energy.

We say that a probability measure $\phi_0$ on $(\Omega_\bl,\mathcal F)$ is a \emph{critical DLR random cluster measure} with parameter $q$ if for all $A\in \mathcal F$ and all 
finite boxes~$\Lambda\subset E(\mathbb Z^2_{\bl})$,
we have
\begin{align} \label{eq:DLR}
\phi_0(A \mid \mathcal F_{ \Lambda^c})(\zeta) = \phi_{\Lambda}^{\zeta}(A) \qquad \text{for $\phi_0$-a.e.\ $\zeta$},
\end{align}
where $\phi_{\Lambda}^{\zeta}$ is the \emph{critical random cluster measure with boundary conditions} $\zeta$ defined on
\[
\Omega_{\Lambda}^{\zeta}=\{ \xi \in \Omega: \xi(e)=\zeta(e) \textnormal{ for } e\in \Lambda^c\},
\]
and given by
\begin{align}  \label{eq:bcrc1}
\phi_{\Lambda}^{\zeta}(\xi)   \propto q^{k_{\Lambda}(\xi)} \sqrt{q}^{|\xi \cap \Lambda|} . 
\end{align}
Here $k_{\Lambda}(\xi)$ is the number of connected components of $\xi$ that intersect $\Lambda$. 
We note that if $\zeta$ contains at most one infinite cluster, then
\begin{align} \label{eq:bcrc}
\phi_{\Lambda}^{\zeta}(\xi)\propto \sqrt{q}^{|L_{\Lambda}(\xi)|},
\end{align}
where $L_{\Lambda}(\xi)$ are the loops (or biinfinite paths) in $L(\xi)$ that intersect $\Lambda$.
One can check this by establishing that both weights in~\eqref{eq:bcrc1} and \eqref{eq:bcrc} change in the same way after altering the state of a single edge.
The fundamental result for us will be that for $q\in [1,4]$, there exists exactly one critical DLR random cluster measure as was shown in~\cite{DCST}.

\begin{proof}[Proof of Lemma~\ref{lem:phiconvergence}]
Since $\Space_\bl$ is compact, the sequence $(\phi_\n)$ is tight and it is enough to prove that every subsequential limit $\phi_0=\lim_{k\to \infty} \phi_{\n_k}$ is equal to $\phi$. 
To show this, by the uniqueness result of~\cite{DCST}, we only need to check that $\phi_0$ satisfies the 
DLR condition \eqref{eq:DLR} for any box $\Lambda$.
We will do this by arguing that in the infinite volume limit, the value of ${|L_{\textnormal{nctr}}(\xi)|}$ in \eqref{eq:nu} does 
not depend on the state of $\xi$ inside $\Lambda$, which will imply that the conditional distribution of \eqref{eq:nu} simplifies to~\eqref{eq:bcrc}.

To be precise, note that by~\eqref{eq:nu} the measures $\phi_\n$ have finite energy with constants that are uniform in $\n$, and therefore  $\phi_0$ has finite energy as the weak limit of $\phi_{\n_{k}}$. Moreover, $\phi_0$ is clearly translation invariant. 
Therefore by the classical Burton--Keane argument~\cite{BK} the configuration
$\xi$ has at most one infinite connected component $\phi_0$-a.s. The same holds for $\xi^\dagger$ since it has the same distribution as~$\xi$ under $\phi_0$. For topological reasons, this means that $L(\xi)$ contains at most one infinite loop (by which we mean a biinfinite path)~$\phi_0$-a.s.\ which is the interface between these potential infinite primal and dual clusters.

Let $\Lambda_N\subset E(\mathbb Z^2_\bl)$ be a box of size $N\times N$ in $\mathbb Z^2_\bl$ centered at the origin.
For~$N$ such that $\Lambda\subseteq\Lambda_N$, let $s_N$ be the number of paths contained in $\Lambda_N\setminus \Lambda$ (that are parts of loops in $L(\xi)$) that intersect both $\Lambda$ and the outside of~$\Lambda_N$. Note that $s_N$ must be even, and let 
\[
S_{N}=\{\xi: s_N(\xi)\leq 2\}\in \mathcal F_{\Lambda^c} \cap \mathcal F_{\Lambda_N}.
\]
For future reference, also note that since there is at most one infinite loop $\phi_0$-a.s. and since $S_N$ is increasing in $N$, for every $B\in \mathcal F$, we have 
\begin{align} \label{eq:indicator}
\mathbf{1}_{B}=\lim_{N\to \infty}\mathbf{1}_{B\cap S_{N}}, \qquad \phi_0\textnormal{-a.s.}
\end{align}

We now fix $\varepsilon>0$ and take $k$ so large that the law of the percolation configuration
under $\phi_{\n_{k}}$ and $\phi_0$ restricted to $\Lambda_N$ are at total variation distance less than $\varepsilon$ from each other.
This is possible by the weak convergence of $\phi_{\n_{k}}$ to $\phi_0$ and since $N$ is fixed.
Now observe that for two configurations $\zeta, \zeta' \in S_{N}$ such that $\zeta=\zeta'$ on $\Lambda_N$, 
we have $\phi_{\Lambda}^{\zeta}=\phi_{\Lambda}^{\zeta'}$. 
This follows from~\eqref{eq:bcrc}, and the fact that if $s_N(\zeta)=2$, then necessarily the two paths crossing the annulus $\Lambda_N\setminus \Lambda$ 
must belong to the same loop in $L(\zeta)$ (the other case $s_N(\zeta)=0$ is clear).
Moreover, by \eqref{eq:nu} we have for $\zeta\in S_N$, 
\[
 \phi_{\n_k}(\cdot \mid \mathcal F_{ \Lambda^c})(\zeta)=  \phi_{\Lambda}^{\zeta}(\cdot)  \propto \sqrt{q}^{|L_{\Lambda}(\cdot)|}  
 \] since, on $S_{N}$, changing the state of 
an edge in~$\Lambda$ cannot change the number of noncontractible loops in~$L$ (by the same reasoning as above).
Hence, for all events $A\in \mathcal F_{\Lambda_N}$ and $B\in \mathcal F_{\Lambda^c}\cap \mathcal F_{\Lambda_N}$, we can write
\begin{align*}
\int_{B\cap S_{N}} \phi_{\Lambda}^{\zeta}(A)d\phi_0(\zeta)+O(\varepsilon) &=\int_{B\cap S_{N}} \phi_{\Lambda}^{\zeta}(A)d\phi_{\n_{k}}(\zeta) \\
&=\int_{B\cap S_{N}} \phi_{\n_k}(A \mid \mathcal F_{ \Lambda^c})(\zeta)d\phi_{\n_{k}}(\zeta) \\
&=  \phi_{\n_k}(A\cap B \cap S_N ).
\end{align*}
Taking first $k\to \infty$ and using the fact that $A\cap B \cap S_N $ is a local event, and then taking $N \to \infty$ and using \eqref{eq:indicator}, we get
\[
\int_B \phi_{\Lambda}^{\zeta}(A)d\phi_0(\zeta) = \phi_{0}(A\cap B )
\]
for all local events $A\in \mathcal F$ and $B\in \mathcal F_{\Lambda^c} $. This yields the DLR condition \eqref{eq:DLR} since the local events in
$ \mathcal F$ and $\mathcal F_{\Lambda^c}$ generate the respective $\sigma$-algebras.
\end{proof}

\subsection{Convergence of $\mu_{\n}$}
In this section we use the correlation identities from Lemma~\ref{lem:loopcorellations} to deduce weak convergence of $\mu_{\n}$ from the convergence of $\phi_{\n}$.

Recall the (local) map from spin configurations $\sigma$ to arrow configurations~$\alpha$.
Using this correspondence, from now on, we will think of $\mu_\n$ as a measure on $\Sigma_\n: = \{-1,1\}^{\mathbb {T}^{\bl}_\n} \times \{-i,i\}^{\mathbb {T}_\n^\wi}$.
Note that compared to the original definition, now~$\mu_\n$ also accounts for the independent coin flip that we used to decide the value of the spin on the fixed face~$u_0$.
Similarly to previous considerations, 
we will also think of $\Sigma_\n$ as a subset of $\Sigma := \{-1,1\}^{\mathbb {Z}^2_\bl} \times \{-i,i\}^{\mathbb {Z}^2_\wi}$ by setting the values of spins outside the box~\eqref{eq:box}
to $1$ or $i$ depending on the sublattice.
In particular, $\mu_\n$ becomes a measure on $(\Sigma,\mathcal G)$ where $\mathcal G$ is the product $\sigma$-algebra on~$\Sigma$.

We first show convergence of spin correlations.
\begin{lemma} \label{lem:loopcorrelationslimit} Let $u_1,\ldots, u_{2k} \in \mathbb Z_{\bl}^2$ and $v_1,\ldots, v_{2l} \in \mathbb Z_{\wi}^2$ 
be black and white faces of $\mathbb Z^2$ respectively, and let $\rho$ and $s$ be as in Lemma~\ref{lem:loopcorellations}. Then for $c\in [\sqrt 3,2]$, 
\begin{align} \label{eq:corfunctions}
\mathbf E_{\mu_{\n}} \Big[ \prod_{i=1}^{2k}\sigma(u_i) \prod_{j=1}^{2l}\sigma(v_j)  \Big] \to (-1)^{s}\mathbf E_{\phi} \Big[\prod_{\ell\in L} \rho(\ell)\Big] \quad \textnormal{as } \n \to \infty.
\end{align}
\begin{proof} 
We will use the locality property~\eqref{eq:quasilocal} and the fact that there are infinitely many loops in $L(\xi)$ 
surrounding all the faces $u_1,\ldots, u_{2k},v_1,\ldots, v_{2l}$ $\phi$-a.s.

To be precise, by Lemma~\ref{lem:loopcorellations} it is enough to show that
\begin{align} \label{eq:difference}
\Big| \mathbf E_{\phi_{\n}} \Big[\prod_{\ell\in L} \rho(\ell) \Big] - \mathbf E_{\phi} \Big[\prod_{\ell\in L} \rho(\ell)\Big]\Big|\to 0 \quad \textnormal{as } \n \to \infty.
\end{align}
To this end, recall that $\Lambda_N\subset E(\mathbb Z^2_\bl)$ is the box of size $N\times N$ in $\mathbb Z^2_\bl$ centered at the origin, and 
$L(\xi) \cap \Lambda_N$ is the set of loops in $L(\xi)$ that are contained in $\Lambda_N$.
Let $T_N\in\mathcal F_{\Lambda_N}$ be the event that there is a loop in $L(\xi) \cap \Lambda_N$ that surrounds all the faces $u_1,\ldots, u_{2k},v_1,\ldots, v_{2l}$. 
Note that $|\prod_{\ell\in L} \rho(\ell)| \leq C$ deterministically for some $C<\infty$ that depends only on the distances between the fixed faces.
From~\cite{DCST} we know that there are infinitely many loops that surround all the fixed faces $\phi$-a.s., and therefore $\phi(T_N)\to 1$ as $N\to \infty$. Hence, for $\varepsilon >0$ we can choose~$N$ so large that $\phi(T_N) > 1-\varepsilon/C$, and therefore
\begin{align*}
\Big| \mathbf E_{\phi} \Big[\prod_{\ell\in L} \rho(\ell)\Big] -  \mathbf E_{\phi} \Big[\prod_{\ell\in L} \rho(\ell) \mathbf{1}_{T_N}\Big]\Big|=
 \Big| \mathbf E_{\phi} \Big[\prod_{\ell\in L} \rho(\ell)\Big] - \mathbf E_{\phi} \Big[\prod_{\ell\in L \cap \Lambda_N} \rho(\ell) \mathbf{1}_{T_N}\Big]\Big| <\varepsilon,
\end{align*}
where we used the locality property~\eqref{eq:quasilocal} to obtain the equality.
Since the random variables $\prod_{\ell\in L \cap \Lambda_N} \rho(\ell)$ and $\mathbf{1}_{T_N}$ are local, by Lemma~\ref{lem:phiconvergence} we can now take~$M$ so large that
\[
\Big|\mathbf E_{\phi} \Big[\prod_{\ell\in L \cap \Lambda_N} \rho(\ell) \mathbf{1}_{T_N}\Big] - \mathbf E_{\phi_\n} \Big[\prod_{\ell\in L \cap \Lambda_N} \rho(\ell) \mathbf{1}_{T_N}\Big] \Big| <\varepsilon \quad \textnormal{and} \quad 
\phi_{\n}(T_N) > 1-2\varepsilon/C
\]
for all $\n\geq M$. Using~\eqref{eq:quasilocal} again, we altogether get an upper bound of $4\varepsilon$ on~\eqref{eq:difference} for $\n\geq M$.
\end{proof}
\end{lemma}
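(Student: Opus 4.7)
The plan is to combine the finite-volume identity of Lemma~\ref{lem:loopcorellations} with the weak convergence from Lemma~\ref{lem:phiconvergence}. By the former, the spin correlation on $\T_\n$ equals $(-1)^s \mathbf{E}_{\phi_\n}\bigl[\prod_{\ell \in L}\rho(\ell)\bigr]$, so the task reduces to showing
\begin{equation*}
\mathbf{E}_{\phi_\n}\Bigl[\prod_{\ell \in L}\rho(\ell)\Bigr] \longrightarrow \mathbf{E}_{\phi}\Bigl[\prod_{\ell \in L}\rho(\ell)\Bigr] \qquad \text{as } \n \to \infty.
\end{equation*}
The essential difficulty is that this observable is not a local function of the underlying percolation configuration: it depends on every loop in $L(\xi)$, including noncontractible loops winding around the torus and contractible loops lying arbitrarily far from the marked faces. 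So one cannot simply feed a fixed local functional into the weak convergence of Lemma~\ref{lem:phiconvergence}.

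The way to sidestep this is the locality identity~\eqref{eq:quasilocal}. Let $\Lambda_N \subset E(\mathbb{Z}^2_\bl)$ be the box of side length $N$ centered at the origin, and let $T_N \in \mathcal{F}_{\Lambda_N}$ be the event that $L(\xi)$ contains a contractible loop inside $\Lambda_N$ enclosing all of $u_1,\ldots,u_{2k},v_1,\ldots,v_{2l}$. On $T_N$, identity~\eqref{eq:quasilocal} collapses the global product to $\prod_{\ell \in L \cap \Lambda_N}\rho(\ell)\mathbf{1}_{T_N}$, which is a bounded $\mathcal{F}_{\Lambda_N}$-measurable random variable. Since $\rho$ takes finitely many values, there is a deterministic bound $\bigl|\prod_{\ell \in L}\rho(\ell)\bigr| \le C$ depending only on the marked faces. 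The results of~\cite{DCST} on the critical random cluster measure with $q \in [1,4]$ give that, $\phi$-almost surely, infinitely many nested loops of $L$ surround any fixed finite set of faces; in particular $\phi(T_N) \to 1$ as $N \to \infty$.

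A three-$\varepsilon$ argument then concludes. Given $\varepsilon > 0$, I would first pick $N$ so large that $\phi(T_N^c) < \varepsilon/C$, which by~\eqref{eq:quasilocal} places $\mathbf{E}_\phi\bigl[\prod_{\ell \in L}\rho(\ell)\bigr]$ within $\varepsilon$ of the local expectation $\mathbf{E}_\phi\bigl[\prod_{\ell \in L \cap \Lambda_N}\rho(\ell)\mathbf{1}_{T_N}\bigr]$. Then, since both $T_N$ and the truncated product are $\mathcal{F}_{\Lambda_N}$-measurable, Lemma~\ref{lem:phiconvergence} supplies an $M$ such that for $\n \ge M$ the two local expectations differ by at most $\varepsilon$ and $\phi_\n(T_N^c) < 2\varepsilon/C$; a final application of~\eqref{eq:quasilocal} on the $\phi_\n$ side recovers the global product from its truncation up to error $2\varepsilon$. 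The main obstacle is exactly this nonlocality of the observable on the torus: the noncontractible loops carry nontrivial weights $\rho(\ell)=\epsilon(\vec\ell)$ a priori, and arbitrarily faraway contractible loops cannot be ignored without additional input. The key mechanism making everything work is that on $T_N$ any surrounding contractible loop forces, by topology, both every other contractible loop outside $\Lambda_N$ and every noncontractible loop to contribute $\rho(\ell)=1$, which is the content of~\eqref{eq:quasilocal}.
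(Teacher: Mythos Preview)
Your proposal is correct and follows essentially the same approach as the paper's own proof: reduce via Lemma~\ref{lem:loopcorellations}, localize the global observable using~\eqref{eq:quasilocal} on the event $T_N$, invoke~\cite{DCST} to make $\phi(T_N^c)$ small, and then pass the resulting local functional through the weak convergence of Lemma~\ref{lem:phiconvergence}. Even the constants in the three-$\varepsilon$ bookkeeping ($\varepsilon/C$, $2\varepsilon/C$, final bound $4\varepsilon$) match the paper.
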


We are now able to prove the convergence part of Theorem~\ref{thm:existence}. 
\begin{proof}[Proof of part $(i)$ of Theorem~\ref{thm:existence}]
Note again that since~$\Sigma$ is compact, it is enough to prove that all subsequential limits of $\mu_\n$ are equal. By the lemma above, 
all these limits have the same correlation functions of the form \eqref{eq:corfunctions}. We finish the proof by noticing that the indicator function of any local 
event can be written as a linear combination of such correlation functions (see \eqref{eq:indA}). \end{proof}
We denote the limiting measure on $(\Sigma,\mathcal G)$ by $\mu$.

\begin{corollary} \label{cor:loopcorr}
 In the setting of Lemma~\ref{lem:loopcorrelationslimit}, we have 
\begin{align*}
\mathbf E_{\mu} \Big[ \prod_{i=1}^{2k}\sigma(u_i) \prod_{j=1}^{2l}\sigma(v_j)  \Big] = (-1)^{s}\mathbf E_{\phi} \Big[\prod_{\ell\in L} \rho(\ell)\Big].
\end{align*}
\end{corollary}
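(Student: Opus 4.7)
The plan is to combine the convergence result of Lemma~\ref{lem:loopcorrelationslimit} with the weak convergence $\mu_{\n} \to \mu$ established in part $(i)$ of Theorem~\ref{thm:existence}, and conclude by uniqueness of the limit.

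More concretely, I first observe that the spin product
\[
F(\sigma) := \prod_{i=1}^{2k}\sigma(u_i) \prod_{j=1}^{2l}\sigma(v_j)
\]
is a bounded function on $\Sigma = \{-1,1\}^{\mathbb Z^2_\bl} \times \{-i,i\}^{\mathbb Z^2_\wi}$ that depends only on the values of $\sigma$ at the finitely many faces $u_1,\ldots,u_{2k},v_1,\ldots,v_{2l}$. Since $\Sigma$ is equipped with the product discrete topology, any such finite-coordinate function is continuous (and bounded), so $F$ is a bounded continuous function on $\Sigma$.

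Next, by part $(i)$ of Theorem~\ref{thm:existence}, $\mu_{\n}$ converges weakly to $\mu$ on $(\Sigma,\mathcal G)$, hence
\[
\mathbf E_{\mu_{\n}}[F] \;\longrightarrow\; \mathbf E_{\mu}[F] \qquad \textnormal{as } \n \to \infty.
\]
On the other hand, Lemma~\ref{lem:loopcorrelationslimit} gives
\[
\mathbf E_{\mu_{\n}}[F] \;\longrightarrow\; (-1)^{s}\mathbf E_{\phi} \Big[\prod_{\ell\in L} \rho(\ell)\Big] \qquad \textnormal{as } \n \to \infty.
\]
By uniqueness of limits in $\mathbb C$, the two right-hand sides must coincide, which yields the claimed identity.

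There is no genuine obstacle in this argument; the only small point to verify is the continuity of $F$, which is immediate from the product-discrete topology used to view $\mu_{\n}$ as a measure on $\Sigma$ (via the extension convention fixing spins outside the box \eqref{eq:box} to $1$ or $i$ according to the sublattice). In particular the extension convention does not affect the values of the local observable $F$ for all sufficiently large $\n$, so no boundary issue arises when passing to the limit.
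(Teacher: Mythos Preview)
Your argument is correct and is exactly the (implicit) reasoning the paper intends: the corollary is stated without proof immediately after the proof of part~$(i)$ of Theorem~\ref{thm:existence}, and it follows at once by combining the weak convergence $\mu_\n\to\mu$ with Lemma~\ref{lem:loopcorrelationslimit} via uniqueness of limits, just as you wrote.
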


\subsection{Mixing property of $\mu$}
Let $\mathcal G_{\textnormal{even}} \subset \mathcal G$ be the $\sigma$-algebra of even events, i.e., events invariant under the global sign flip $\sigma \mapsto -\sigma$.
In this section we show that $\mu$ as a measure on $(\Sigma, \mathcal {G}_{\textnormal {even}})$ 
(and hence also as a measure on arrow configurations $\mathcal O$ equipped with the product $\sigma$-algebra) is mixing in the sense as in part $(ii)$ of Theorem~\ref{thm:existence}.
Our argument uses Lemma~\ref{lem:loopcorrelationslimit} and heavily relies on the mixing property of the random cluster measure $\phi$ established in~\cite{DCST}.

\begin{proof}[Proof of part $(ii)$ of Theorem~\ref{thm:existence}]
We present the proof in the language of spins. The corresponding statement for arrow configurations follows immediately.

Let $A,B\in \mathcal G_{\textnormal{even}}$ depend on the state of spins in finite square boxes $V,V'\subset \mathbb Z^2_\bl \cup \mathbb Z^2_\wi$ respectively. 
We have 
\begin{align}\label{eq:indA}
\mathbf 1_{A}(\sigma)&= \sum_{\tilde \sigma \in A} \prod_{v\in V}\tfrac12(1+\epsilon(v)\tilde\sigma(v)\sigma(v)) 
=  \tfrac1{2^{|V|}} \sum_{S\subseteq V}  \Big(\sum_{\tilde \sigma \in A} \prod_{v \in S} \epsilon(v)  \tilde \sigma(v) \Big) \prod_{v \in S}  \sigma(v),
\end{align}
where $\epsilon(v)=1$ if $v\in \mathbb Z^2_\bl$ and $\epsilon(v)=-1$ if $v\in \mathbb Z^2_\wi$. Since $A$ is invariant under sign change, only terms involving sets $S$ of even 
cardinality remain after the sum over~$\tilde \sigma$ is taken. 
This means that for some (explicit) coefficients $\beta_S$, $\beta'_{S'}$,
\begin{align*}
\mathbf 1_{A}(\sigma) = \mathop{\sum_{S \subseteq V}}_{|S| \textnormal{ even}} \beta_S \sigma(S),\quad \textnormal{ and } \quad \mathbf 1_{B}(\sigma) = \mathop{\sum_{S' \subseteq V'}}_{|S'| \textnormal{ even}} \beta'_{S' }\sigma(S'),
\end{align*}
where $\sigma(S)=\prod_{u\in S} \sigma(u)$, and therefore
\begin{align*}
\mu(A\cap B) -\mu(A)\mu(B) & = \mathop{\sum_{S\subseteq V,S' \subseteq V'}}_{|S|, |S'| \textnormal{ even}}\beta_{S}\beta_{S'}
( \mathbf E_{\mu} [ \sigma(S)  \sigma(S')] - \mathbf E_{\mu} [ \sigma(S)] \mathbf E_{\mu} [   \sigma(S')]) .
\end{align*}
Hence, to get \eqref{eq:mixing} it is enough to show that there exists $\kappa>0$ such that 
\begin{align} \label{eq:sigmafactorization}
| \mathbf E_{\mu} [ \sigma(S)  \sigma(S')] - \mathbf E_{\mu}[\sigma(S)] \mathbf E_{\mu}[\sigma(S')] | \leq Kd(V,V')^{-\kappa}  
\end{align}
for any pair of sets $S\subseteq V,S'\subseteq V'$ of even cardinality, where $K$ depends only on the size of $V$ and $V'$.

To this end, we use Lemma~\ref{lem:loopcorrelationslimit}, where we choose an equal number of sources and sinks in~$S$ (and hence also in $S'$), to get
\begin{align} \label{eq:rhom}
& \mathbf E_{\mu} [ \sigma(S)  \sigma(S')]  =(-1)^{s+ s'} \mathbf E_{\phi}\Big [ \prod_{\ell \in L} \rho_{S\cup S'}(\ell) \Big], \\
& \mathbf E_{\mu} [ \sigma(S)]  =(-1)^s \mathbf E_{\phi}\Big [ \prod_{\ell \in L} \rho_S(\ell) \Big] \nonumber \\
& \mathbf E_{\mu} [ \sigma(S')] = (-1)^{ s'}\mathbf E_{\phi}\Big [ \prod_{\ell \in L} \rho_{ S'}(\ell) \Big], \nonumber 
\end{align}
where $s$ and $ s'$ are the number of white sinks in $S$ and $ S'$, and where $\rho_{S\cup  S'}$ is defined for $S\cup  S'$
as in~\eqref{eq:rho}, and $\rho_S$ (resp.\ $\rho_{ S'}$) is defined for $S$ (resp.\ $ S'$) using the same (but properly restricted) choice of sinks and sources.
In particular, we have that $\rho_S= \rho_{S\cup  S'}$ and $\rho_{ S'}= \rho_{S\cup  S'}$ on loops not surrounding any face of $ S'$ and $S$ respectively. 
We note here that $S$ can contain an even or an odd number of, say, white faces. In the latter case, the two last correlations are equal to zero. However, as mentioned before, the
formula from Lemma~\ref{lem:loopcorrelationslimit} is still valid, and we chose to use it to have a uniform treatment of both cases.

Let $\Lambda_N ,\Lambda'_N \subset E( \mathbb Z^2_{\bl})$ be boxes of size $N\times N$ centered around the centers of $V$ and $V'$ respectively. 
Define $T_{N}\in\mathcal F_{\Lambda_N}$ to be the event that there is no loop in $L$ that intersects both $V$ and the complement of $\Lambda_N$. 
Analogously define $ T'_{N}\in\mathcal F_{ \Lambda'_N}$ for $ \Lambda'_N$ and $ V'$. 
By the strong RSW property of $\phi$ established in~\cite{DCST}, we know that there exists $\kappa'>0$ depending only on $q$, and $K_2<\infty$ depending on $q$ and the size of $V$ and $V'$, such that for all $N>0$,
\begin{align}\label{eq:RSW}
\phi(T_N\cap  T'_N)\geq 1-K_2 N^{-\kappa'}.
\end{align}
Indeed, to ensure $T_N$, it is enough to construct an open circuit in the percolation configuration $\xi$ that surrounds $V$ and stays within $\Lambda_N$. 
By the strong RSW property and the positive association of $\phi$, this can be done with constant probability
for every annulus in a properly defined sequence of disjoint concentric and exponentially growing annuli centered around $V$. We leave the details of this standard argument to the reader.

Moreover we have 
\[
\big |\prod_{\ell \in L} \rho_{U}(\ell) \big| \leq \max(\tan \lambda, 1)^{ |V| +|V'|}  = : K_1
\]
deterministically for $U= S, S', S\cup  S' $, since there can be at most $|V| +|V'|$ loops intersecting $V\cup V'$. 
Hence, by \eqref{eq:rhom}, \eqref{eq:RSW} we have
\begin{align} \label{eq:in1}
& \Big|\mathbf E_{\mu} [ \sigma(S)  \sigma( S')]-\mathbf E_{\phi} \Big[ \prod_{\ell \in L} \rho_{S\cup  S'}(\ell)\mathbf 1_{T_N}\mathbf 1_{ T'_N}\Big]\Big| \leq K_3N^{-\kappa'},  \\ 
 &\Big| \mathbf E_{\mu} [ \sigma(S)] -\mathbf E_{\phi}\Big [ \prod_{\ell \in L\cap \Lambda_N} \rho_S(\ell)\mathbf 1_{T_N}\Big]\Big|   \leq K_3N^{-\kappa'}, \nonumber\\
  &\Big| \mathbf E_{\mu} [ \sigma(S')] -\mathbf E_{\phi}\Big [ \prod_{\ell \in L\cap  \Lambda'_N} \rho_{ S'}(\ell)\mathbf 1_{ T'_N}\Big]\Big|   \leq K_3N^{-\kappa'}, \nonumber
\end{align}
where $K_3=K_1K_2$.
Combined with the fact that the spin correlations are by definition bounded by one, the last two inequalities give
\begin{align}  \label{eq:in3}
&\Big| \mathbf E_{\mu} [ \sigma(S)]  \mathbf E_{\mu} [ \sigma(S')] -\mathbf E_{\phi}\Big [ \prod_{\ell \in L\cap \Lambda_N} \rho_S(\ell)\mathbf 1_{T_N}\Big]
 \mathbf E_{\phi}\Big [ \prod_{\ell \in L\cap  \Lambda'_N} \rho_{ S'}(\ell)\mathbf 1_{ T'_N}\Big]\Big| \\ 
 & \qquad \qquad \leq  K_3N^{-\kappa'} (K_3N^{-\kappa'}+2) .  \nonumber
\end{align}

On the other hand, we have
\begin{align} \label{eq:in2}
 \prod_{\ell \in L}  \rho_{S\cup \tilde S}(\ell)\mathbf 1_{T_N}\mathbf 1_{ T'_N}=  \prod_{\ell \in L \cap \Lambda_N} 
 \rho_S(\ell)\mathbf 1_{T_N} \prod_{\ell \in L\cap \Lambda'_N}\hspace{-0.3cm} \rho_{ S'}(\ell) \mathbf 1_{ T'_N}
\end{align}
whenever $\Lambda_N$ and $ \Lambda'_N$ are disjoint.
Moreover, since these two factors are local functions depending only on the state of edges in $\Lambda_N$ and $ \Lambda'_N$ 
respectively, by the mixing property of the critical random cluster model from in Theorem~5 of~\cite{DCST}, we have
\begin{align} \label{eq:in4}
&\Big|\mathbf E_{\phi} \Big[ \prod_{\ell \in L \cap \Lambda_N} 
 \rho_S(\ell)\mathbf 1_{T_N} \prod_{\ell \in L\cap \Lambda'_N}\hspace{-0.3cm} \rho_{ S'}(\ell) \mathbf 1_{ T'_N} \Big]  - 
\mathbf E_{\phi} \Big[ \prod_{\ell \in L \cap \Lambda_N}  \rho_S(\ell)\mathbf 1_{T_N}\Big]\mathbf E_{\phi} \Big[  \prod_{\ell \in L\cap \Lambda'_N}\hspace{-0.3cm} \rho_{ S'}(\ell) \mathbf 1_{ T'_N}\Big] \Big|
\\ & \nonumber\qquad \leq\mathbf E_{\phi} \Big[ \prod_{\ell \in L \cap \Lambda_N}  |\rho_S(\ell)|\mathbf 1_{T_N}\Big]\mathbf E_{\phi} \Big[  \prod_{\ell \in L\cap \Lambda'_N}\hspace{-0.3cm}| \rho_{ S'}(\ell)| \mathbf 1_{ T'_N}\Big] 
\big(\tfrac{N}{ d(\Lambda_N,\Lambda'_N)+N} \big)^{\kappa''} 
\\&\nonumber \qquad \leq  K_1^2 \big(\tfrac{N}{ d(\Lambda_N,\Lambda'_N)+N} \big)^{\kappa''} 
\end{align}
whenever $d(\Lambda_N,\Lambda'_N)\geq N$ for some $\kappa''>0$ that depends only on $q$. 
Combining this with \eqref{eq:in1}, \eqref{eq:in3}, \eqref{eq:in2} and \eqref{eq:in4} we obtain that the left-hand side of \eqref{eq:sigmafactorization} is at most 
\[
K_3N^{-\kappa'} (K_3N^{-\kappa'}+3) + K_1^2\big(\tfrac{N}{ d(\Lambda_N,\Lambda'_N)+N} \big)^{\kappa''} \leq K_4\big(N^{-\kappa'} +\big(\tfrac{N}{ d(\Lambda_N,\Lambda'_N)+N} \big)^{\kappa''}\big)
\] 
for $d(\Lambda_N,\Lambda'_N)\geq N$, where $K_4$ depends only on $q$ and the size of $V$ and $V'$.
Taking $N=\lfloor \sqrt{d(V,V')}\rfloor$ we show \eqref{eq:sigmafactorization} and complete the proof.
\end{proof}

We note that ergodicity of $\mu$ follows by using standard arguments where one approximates translation invariant events by local events, and then uses the established mixing property.

\subsection{Decorrelation of monochromatic spins} In this section we study the decay of spin correlations for spins on faces of the same color, and without loss of generality we choose the black faces.
The simplest case of Corollary~\ref{cor:loopcorr} says that for $u,u'\in \mathbb Z^2_{\bl}$,
\begin{align} \label{eq:twopoint}
\mathbf E_{\mu}[\sigma(u)\sigma(u')] = \mathbf E_{\phi} \Big[\prod_{\ell\in L} \rho(\ell)\Big] =  \mathbf E_{\phi} \Big[\rho^{N(u,u')}(-\rho)^{N(u',u)}\Big] ,
\end{align}
where $N(u,u')$ is the number of loops in $L=L(\xi)$ which surround $u$ but not~$u'$.
Note that for $c=2$, we have $\rho=0$ and the right-hand side becomes $\mathbf E_{\phi} [N(u,u')=0]$ (see Remark~\ref{rem:c2}).

Using that $\rho = \tanh \lambda \leq 1$ for $c\in [\sqrt{2+\sqrt 2},2]$ we obtain the following result.

\begin{theorem} \label{thm:decorrelation}
 For $c\in [\sqrt{2+\sqrt 2},2]$, there exists $\theta=\theta(c)>0$ such that for all $u,u'\in \mathbb Z^2_{\bl}$,
\begin{align} \label{eq:decorrelation1}
 \mathbf E_{\mu}[\sigma(u)\sigma(u')]\leq |u-u'|^{-\theta}.
\end{align}
\end{theorem}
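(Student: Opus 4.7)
The plan is to combine the two-point identity~\eqref{eq:twopoint} with the RSW and finite-energy machinery available for the critical random cluster measure $\phi$ on $\mathbb Z^2_\bl$ at $q=(c^2-2)^2\in[2,4]$~\cite{DCST}. Write $R=|u-u'|$, fix $k=\lfloor\log_4(R/2)\rfloor$, and consider the pairwise disjoint annuli $A_i=\{x:4^{i-1}\leq |x-u|\leq 4^i\}$ for $i=1,\dots,k$. All of them avoid $u'$, so any loop of $L(\xi)$ entirely contained in $A_i$ and surrounding $u$ automatically separates $u$ from $u'$. Letting $E_i$ denote the event that such a loop exists, the strong RSW property of~\cite{DCST}, uniform over boundary conditions on $A_i$, produces a constant $c_0=c_0(q)>0$ with $\phi(E_i\mid\mathcal F_{A_i^c})\geq c_0$ for every $i$.

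For $c\in(\sqrt{2+\sqrt{2}},2]$, which corresponds to $\rho\in[0,1)$, I would take the modulus in~\eqref{eq:twopoint} to get $|\mathbf E_\mu[\sigma(u)\sigma(u')]|\leq \mathbf E_\phi[\rho^M]$ with $M=N(u,u')+N(u',u)\geq\sum_i\mathbf 1_{E_i}$. The annuli $A_1,\dots,A_{k-1}$ all lie in the inner disc of $A_k$, hence in $A_k^c$, so $\prod_{i<k}\rho^{\mathbf 1_{E_i}}$ is $\mathcal F_{A_k^c}$-measurable. Conditioning on $\mathcal F_{A_k^c}$ and applying the tower property,
\[
\mathbf E_\phi\Bigl[\prod_{i=1}^k\rho^{\mathbf 1_{E_i}}\Bigr]\leq\bigl(1-c_0(1-\rho)\bigr)\,\mathbf E_\phi\Bigl[\prod_{i=1}^{k-1}\rho^{\mathbf 1_{E_i}}\Bigr],
\]
and iterating down to $i=1$ yields $\mathbf E_\phi[\rho^M]\leq(1-c_0(1-\rho))^k\leq CR^{-\theta}$, with $\theta=-\log(1-c_0(1-\rho))/\log 4>0$.

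For the boundary case $c=\sqrt{2+\sqrt{2}}$ one has $\rho=1$ and the modulus estimate degenerates: the identity reduces to $\mathbf E_\mu[\sigma(u)\sigma(u')]=\mathbf E_\phi[(-1)^{N(u',u)}]$. I would recenter the annuli at $u'$ and aim for a conditional parity estimate of the form
\[
\bigl|\mathbf E_\phi\bigl[(-1)^{N(u',u)}\,\big|\,\mathcal F_{A_i^c}\bigr]\bigr|\leq 1-2\delta
\]
for some $\delta>0$ uniform in $i$ and in the boundary data. This would come from exhibiting, inside every $A_i$, an explicit local modification (for instance an extra short dual circuit inside a sub-annulus of $A_i$) that flips by exactly one the number of loops of $L(\xi)$ contained in $A_i$ and surrounding $u'$, and that is admissible with positive conditional probability under any boundary condition via RSW and finite energy. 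Iterating inward through the $k$ annuli would then give $|\mathbf E_\phi[(-1)^{N(u',u)}]|\leq(1-2\delta)^k\leq R^{-\theta}$.

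The hard part will be this conditional parity step. RSW directly bounds from below the probability that a separating circuit \emph{exists}, but the \emph{parity} of the number of separating loops is a more rigid topological observable that does not respect the monotonicity and FKG arguments used in the $\rho<1$ case; a successful argument must produce a concrete parity-flipping local move admissible with uniformly positive conditional probability, and is likely to invoke the self-dual structure of the critical $q=2$ random cluster model together with a two-scale RSW construction inside each $A_i$.
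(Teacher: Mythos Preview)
Your argument for $c\in(\sqrt{2+\sqrt2},2]$ (i.e.\ $\rho<1$) is essentially the paper's: the paper bounds the right-hand side of~\eqref{eq:twopoint} by $\mathbf E_\phi[\rho^{N(u,u')}]$, notes that $N(u,u')$ stochastically dominates a binomial with order $\log R$ trials and uniformly positive success probability via RSW on dyadic annuli, and leaves the iteration implicit. Your conditioning scheme is just an explicit way of writing that domination.

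For $c=\sqrt{2+\sqrt2}$ ($\rho=1$, $q=2$) you do not actually give a proof: you correctly isolate a conditional parity estimate as the missing ingredient and describe what a parity-flipping local move ought to achieve, but you do not construct one. This is a genuine gap, and the step is more delicate than it looks --- a local surgery inside $A_i$ generically merges or splits several of the loops that cross the annulus, so there is no obvious modification that changes the parity of $N(u',u)$ by exactly one with uniformly positive conditional probability.

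The paper bypasses this entirely with a short symmetry trick. Writing $v=u+(1,0)$, $v'=u'+(1,0)$ for the white faces immediately to the right of $u,u'$, and $e,e'$ for the edges of $\mathbb Z^2$ separating them, translation invariance of the law of $L(\xi)$ gives
\[
\mathbf E_\mu[\sigma(u)\sigma(u')]=\mathbf E_\phi\bigl[(-1)^{N(u',u)}\bigr]=\tfrac12\,\mathbf E_\phi\bigl[(-1)^{N(u',u)}+(-1)^{N(v',v)}\bigr].
\]
A deterministic topological observation then finishes the job: for every $\xi$, either $N(u',u)$ and $N(v',v)$ differ by exactly one --- so the two terms cancel --- or they are equal, and the latter happens only when the loop through $e$ surrounds $u'$ or $v'$, or the loop through $e'$ surrounds $u$ or $v$. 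Hence the whole expectation is bounded by the probability that one of the primal or dual clusters at $u,u',v,v'$ has radius at least $|u-u'|$, which decays polynomially by RSW. This two-line cancellation replaces your unproved parity-randomisation machinery.
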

We note that positivity of this two-point function follows from the percolation representation of the spin model described in Section~\ref{sec:omega}. 
We also note that our argument does not give much information on the value of the exponent $\theta$.

\begin{proof}
We consider two cases.

\textbf{Case I:} $c\in (\sqrt{2+\sqrt 2},2]$. In this case $\rho<1$, and we can simply bound the right-hand side of \eqref{eq:twopoint} from above by $\mathbf E_{\phi} [\rho^{N(u,u')}] $. 
Note that $N(u,u')$ is bounded from below by the number of loops surrounding $u$ whose diameter is smaller that $|u-u'|$. This number on the other hand stochastically dominates a binomial random variable with $\log |u-u'|$ trials and with (uniformly in $u,u'$) positive success probability.
This is a consequence of the strong RSW results for the random cluster model obtained in~\cite{DCST}. Indeed, using the positive association of the measure and the 
the fact that one can cross long rectangles with uniform positive probability and under arbitrary boundary conditions, one can iteratively construct circuits of $\xi$ and $\xi^{\dagger}$
in exponentially growing annuli around $u$. Each pair of such consecutive clusters of $\xi$ and $\xi^{\dagger}$ contributes one loop to $L(\xi)$ that surrounds $u$ but not $u'$.
This yields \eqref{eq:decorrelation1} by using elementary properties of binomial distribution. We leave the details to the reader.

\textbf{Case II:} $c= \sqrt{2+\sqrt 2}$. In this case $\rho=1$ and the right-hand side of \eqref{eq:twopoint} simplifies to $\mathbf E_{\phi} [(-1)^{N(u',u)}] $.
Let $v=u+(1,0),v'=u'+(1,0)$ be the two vertices of $\mathbb Z^2_{\wi}$ directly to the right of $u$ and $u'$, and let $e,e'\in E(\mathbb Z^2)$ 
be the edges separating $u$ from $v$, and $u'$ from $v'$ respectively. Since the law of $L(\xi)$ is invariant under translation by $(1,0)$, we 
have that $N(u',u)$ has the same distribution as $N(v',v)$, and we can write  
\begin{align} \label{eq:cancellations}
\mathbf E_{\mu}[\sigma(u)\sigma(u')] & =\tfrac 12 \mathbf E_{\phi} [(-1)^{N(u',u)}+(-1)^{N(v',v)}].
\end{align}
We now notice that for each configuration of $\xi$, we have $N(u',u)(\xi)=N(v',v)(\xi)$ if there is a loop in $L(\xi)$ which goes through 
$e$ and surrounds $u'$ or $v'$, or there is a loop that goes through $e'$ and surrounds $u$ or $v$. 
Moreover, in this case $N(u',u)$ is even. 
Otherwise we have $N(u',u)(\xi)=N(v',v)(\xi)\pm 1$ and the corresponding two terms in the expression 
above cancel out. All in all we obtain that \eqref{eq:cancellations} is bounded above by the probability that the cluster in $\xi$ of either $u$, $u'$, $v$ or $v'$ has radius larger than $|u-v|$.
Again by the RSW property of the critical random cluster measure, this probability decays polynomially in $|u-v|$, and we finish the proof.
\end{proof}

\begin{remark}
We want to stress the fact that such polynomial decorrelation (including a polynomial lower bound) for monochromatic spins is expected to hold for all positive $c$. However, so far we were not able to obtain it 
using \eqref{eq:decorrelation1}. The reason is that in the case when $\rho>1$ one needs to argue that the fluctuations of the 
random sign $(-1)^{N(u',u)}$ and the exponential growth of $\rho^{N(u,u')+N(u',u)}$ cancel out to order $O(|u-u'|^{-\theta})$. Note that \eqref{eq:decorrelation1} already implies 
(since the left-hand side is bounded above by one) that such 
cancellations occur to order~$O(1)$.
\end{remark}

\begin{remark}
By arguments as in the previous section, polynomial decorrelation of monochromatic spins yields a similar mixing property of $\mu$ for all local events (not only even local events).
\end{remark}

\section{Delocalization of the height function}\label{sec:details}
In this section we combine the framework developed in~\cite{Lis19} with the results from the previous sections to prove delocalization of the height function for $c\in[\sqrt{2+\sqrt 2},2]$.
To this end, we need to consider a conditioned version of the six-vertex model.
We define $\mathcal O^0_{\n}\subset \mathcal {O}_\n$ to be the set of arrow configurations such that the spin system $\sigma$ is globally well defined on $\T_{\n}$. In other words, 
these are the arrow configurations such that the increment of the height function along any noncontractible cycle in the dual graph $\mathbb T_\n^*$ is zero mod $4$.
We denote by $\mu^0_{\n}$ the measure $\mu_{\n}$ conditioned on $\mathcal O^0_{\n}$. As before, we will identify $\mu^0_{\n}$ with a probability measure on the set of spin configurations $\Sigma_\n$, which we think of as a subset of~$\Sigma$.

\subsection{Convergence of $\mu^0_{\n}$}
We will first show that $\mu^0_{\n}$ also converges to $\mu$ as $\n\to \infty$.
The argument is analogous to the one used to establish convergence of $\mu_\n$ itself, and we will only focus here on the (topological) differences arising from the conditioning on $\mathcal O^0_{\n}$.

To this end, we perform the same steps as in the unconditional BKW representation.
We first expand the arrow configurations in $\mathcal O^0_{\n}$ to obtain a set of fully-packed oriented loop configurations, denoted by $\vec {\mathcal {L}}^0_\n$. 
We denote the sets of oriented and unoriented loop configurations composed of only contractible loops by $\vec {\mathcal {L}}^{\textnormal{ctr}}_\n$ and $ {\mathcal {L}}^{\textnormal{ctr}}_\n$ respectively.
We now notice that any contractible oriented loop contributes zero to the increment of the height function along any noncontractible cycle. 
Hence, $\vec {\mathcal {L}}^{\textnormal{ctr}}_\n \subset \vec {\mathcal {L}}^0_\n$ and the complex measure induced on $\vec {\mathcal {L}}^{\textnormal{ctr}}_\n$ 
and the probability measure induced on $ {\mathcal {L}}^{\textnormal{ctr}}_\n$ by $\mu^0_\n$ is the same as that induced by~$\mu_{\n}$. 

To treat the case involving noncontractible loops, we recall a topological fact saying that for a simple noncontractible closed curve on the torus, the algebraic numbers $(k,l)$
of times the curve intersects the equator and a fixed meridian respectively are coprime (in particular, one of them has to be odd). 
Moreover, such pairs of numbers $(k,l)$ are in a one-to-one correspondence with isotopy classes of such curves.
Let $\vec L \in \vec {\mathcal {L}}_\n \setminus \vec {\mathcal {L}}^{\textnormal{ctr}}_\n$ contain noncontractible loops.
Note that since these loops do not intersect, they have to be, up to orientation, of the same isotopy class $(k,l)$. 
Moreover, since the torus $\mathbb T_\n$ is of even size, the total increment of the height function must be even along any noncontractible loop. 
Combined with the fact that at least one of the numbers $(k,l)$, say $k$, is odd, this means that there must be an even number, say $2m$, of noncontractible loops in $\vec L$.
Let $m_1$ and $m_2$ be the numbers of such loops which intersect the meridian from right to left and from left to right respectively. In particular $m_1+m_2=2m$.
Then the increment of the height function of $\vec L$ is $a=(m_1-m_2)k$ along the meridian and $b=\pm(m_1-m_2)l$ along the equator.
If we now reverse the orientation of the noncontractible loop $\vec \ell_0\in \vec L$ 
which goes through the vertex with the smallest number (in some fixed ordering), we obtain a configuration $\vec L'$ for which these increments are
$a'=(m_1-m_2\pm2)k$ and $b'=\pm(m_1-m_2\pm2)l$ respectively. Since $m_1-m_2$ is even, we have the following cases:
if $l$ is even, then $a=a'+2 \textnormal{ (mod 4)}$ and $b=b'=0 \textnormal{ (mod 4)}$, and if $l$ is odd, then $a=b=a'+2 =b'+2\textnormal{ (mod 4)}$. 
In both situations, exactly one of the two configurations $\vec L$ and $\vec L'$ belongs to $\vec {\mathcal {L}}^0_\n$. Note that the correspondence~$\vec L \leftrightarrow \vec L'$ 
is involutive and measure preserving (since $\vec \ell_0$ has total winding zero). This implies that exactly half (in terms of the induced complex measure) oriented 
loop configurations in $\vec {\mathcal {L}}_\n \setminus \vec {\mathcal {L}}^{\textnormal{ctr}}_\n$
belong to $\vec {\mathcal {L}}^0_\n$.
In particular we get the following formula for the induced probability measure on $\mathcal{L}_\n$, 
\begin{align} \label{eq:nu0}
\phi^0_\n( L) =  \frac{1}{Z^0_{\n}}  \sqrt{q}^{|L|} \big(\tfrac{2}{\sqrt q}\big)^{|L_{\textnormal{nctr}}|}\tfrac12 \big(1+\mathbf 1_{ {\mathcal {L}}^{\textnormal{ctr}}_\n}(L)\big),
\end{align}
where $L_{\textnormal{nctr}}$ is the set of noncontractible loops in $L$.

As a result of considerations exactly like in the previous section, we obtain the following convergence.
\begin{proposition} \label{lem:0conv}
For $c\in [\sqrt 3, 2]$, $\mu^0_\n \to \mu$ weakly as $\n \to \infty$.
\end{proposition}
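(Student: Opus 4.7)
The plan is to mimic the argument of Section~3 verbatim, with the unconditioned measures $\mu_\n,\phi_\n$ replaced throughout by their conditioned analogues $\mu^0_\n,\phi^0_\n$. Two ingredients are needed: (a)~a weak convergence statement $\phi^0_\n\to\phi$ analogous to Lemma~\ref{lem:phiconvergence}, and (b)~a spin-loop correlation identity for $\mu^0_\n$ analogous to Lemma~\ref{lem:loopcorellations}. Once these are in place, the argument of Lemma~\ref{lem:loopcorrelationslimit} and the compactness/expansion argument used in the proof of Theorem~\ref{thm:existence}(i) transfer without change.

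For (a), I would follow the proof of Lemma~\ref{lem:phiconvergence} almost word-for-word. By \eqref{eq:nu0}, the Radon--Nikodym derivative $d\phi^0_\n/d\phi_\n$ is a bounded global factor, so uniform finite energy and translation invariance carry over to any subsequential weak limit $\phi^0_0$; Burton--Keane then guarantees that $L(\xi)$ contains at most one biinfinite loop $\phi^0_0$-almost surely. The key new point in the DLR verification is the following: on the event $S_N$ defined in the proof of Lemma~\ref{lem:phiconvergence}, together with the event that there exist open circuits in both $\xi$ and $\xi^\dagger$ surrounding $\Lambda$ within the annulus $\Lambda_N\setminus\Lambda$ (which occurs with probability tending to $1$ by the RSW property of~\cite{DCST}), flipping edges inside $\Lambda$ can neither create nor destroy any noncontractible loop. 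Hence the indicator $\mathbf 1_{\mathcal L^{\textnormal{ctr}}_\n}$ is constant in the edges inside $\Lambda$, the conditional law $\phi^0_\n(\cdot\mid\mathcal F_{\Lambda^c})$ again reduces to $\phi^\zeta_\Lambda$, and the remainder of the DLR argument closes identically, yielding $\phi^0_0=\phi$ by the uniqueness result of~\cite{DCST}.

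For (b), the same BKW expansion applied to the restricted sum gives
\[
(-1)^s\mathbf E_{\mu^0_\n}\Big[\prod_{i=1}^{2k}\sigma(u_i)\prod_{j=1}^{2l}\sigma(v_j)\Big]=\frac{1}{Z^0_\n}\sum_{\vec L\in\vec{\mathcal L}^0_\n}\epsilon(\vec L)w(\vec L).
\]
Using the orientation-reversing involution on a distinguished noncontractible loop set up in the paragraphs preceding \eqref{eq:nu0}, the contractible part of the sum contributes in full while the noncontractible part contributes exactly a factor $1/2$ of the unrestricted sum, matching the weight in \eqref{eq:nu0}. Combining with the loop factorization as in \eqref{def:rho} produces
\[
(-1)^s\mathbf E_{\mu^0_\n}\Big[\prod_{i=1}^{2k}\sigma(u_i)\prod_{j=1}^{2l}\sigma(v_j)\Big]=\mathbf E_{\phi^0_\n}\Big[\prod_{\ell\in L}\rho(\ell)\Big].
\]
Given (a) and (b), the proof of Lemma~\ref{lem:loopcorrelationslimit} applies verbatim via the locality property~\eqref{eq:quasilocal} to show convergence of these correlations to the corresponding ones under $\mu$, and then the expansion~\eqref{eq:indA} of indicators of local events into spin correlations, together with compactness of $\Sigma$, yields $\mu^0_\n\to\mu$ as in the proof of Theorem~\ref{thm:existence}(i).

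The main obstacle is step~(b): while the winding factor in $w(\vec L)$ is automatically preserved under the involution (noncontractible loops have zero winding), the zipper-dependent factor $\epsilon(\vec L)$ need not be, and one must verify using topological arguments about the algebraic crossing number of the distinguished noncontractible loop with $\Gamma$ that the paired contributions combine so as to reproduce exactly the per-loop factor $\rho(\ell)$ prescribed by \eqref{def:rho}. Once this bookkeeping is settled, the rest of the proof is a transcription of the arguments from the previous section.
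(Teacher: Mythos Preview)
Your plan is exactly what the paper has in mind (its entire proof is the phrase ``considerations exactly like in the previous section''), and step~(a) as well as the concluding transfer via Lemma~\ref{lem:loopcorrelationslimit} and \eqref{eq:indA} are fine. The circuit event you add in (a) is unnecessary---the same reasoning that shows $|L_{\textnormal{nctr}}|$ is unchanged on $S_N$ shows $\mathbf 1_{\mathcal L^{\textnormal{ctr}}_\n}$ is unchanged---but harmless.

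The one real issue is the exact identity you state in (b). Writing the constraint defining $\vec{\mathcal L}^0_\n$ as $m_1\equiv m\pmod 2$ (which is what the paper's involution amounts to) and applying a parity filter to the sum over orientations of the $2m$ noncontractible loops gives
\[
\sum_{\textnormal{constraint}}\ \prod_{j=1}^{2m}\epsilon(\vec\ell_j)
=\tfrac12\prod_{j}\bigl(\epsilon(\vec\ell_j)+\epsilon(-\vec\ell_j)\bigr)
+\tfrac{(-1)^m}{2}\prod_{j}\bigl(\epsilon(-\vec\ell_j)-\epsilon(\vec\ell_j)\bigr).
\]
The second product vanishes as soon as \emph{some} noncontractible loop has $\epsilon(\vec\ell_j)\in\{\pm1\}$, but if every one of them has $\epsilon(\vec\ell_j)\in\{\pm i\}$ the correction is nonzero and the clean formula $\mathbf E_{\mu^0_\n}[\cdots]=\mathbf E_{\phi^0_\n}[\prod_\ell\rho(\ell)]$ fails. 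So the ``bookkeeping'' you propose cannot succeed in general.

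This does not break the argument. Each factor $|\epsilon(-\vec\ell_j)-\epsilon(\vec\ell_j)|\le 2$, so the correction is bounded in modulus by $2^{2m-1}$, which is exactly the $\phi^0_\n$-weight carried by the noncontractible loops; hence the resulting observable is still bounded by the same constant $C$ as in Lemma~\ref{lem:loopcorrelationslimit}. Moreover, if the zipper paths are chosen inside a fixed box $B$ containing the sources and sinks, then on the event that some contractible loop in $L\cap\Lambda_N$ surrounds $B$, every noncontractible loop is disjoint from $\Gamma$, hence has $\epsilon(\vec\ell)=1$, and the correction term vanishes. On that event the observable equals $(-1)^s\prod_{\ell\in L\cap\Lambda_N}\rho(\ell)$, and the proof of Lemma~\ref{lem:loopcorrelationslimit} goes through unchanged. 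In short: do not try to establish the exact identity in~(b); establish it on the localizing event and use boundedness off it.
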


\subsection{The percolation process $\omega$} \label{sec:omega}

We follow~\cite{GlaPel,Lis19} and define a bond percolation model $\omega$ on top of the spin configuration $\sigma$ sampled according to $\mu^0_\n$ 
(the corresponding parameters in~\cite{Lis19} are $q=q'=2$ and $a=b=c^{-1}$). 
We note that the model was also used in~\cite{SpiRay} in the study of the six-vertex model in the localized regime.

Recall that the graphs $\mathbb{T}^{\bl}_\n$ and $\mathbb{T}^{\wi}_\n$ (likewise $\mathbb {Z}^2_\bl$ and $\mathbb {Z}^2_\wi$) are dual to each other, 
and denote by $\sigma^{\bl}$ and $\sigma^{\wi}$ the restrictions 
of the spin configuration $\sigma$ to the vertices of the respective graphs. 
We now define $\eta(\sigma^{\wi}) \subseteq E(\mathbb{T}^{\bl}_\n)$ to be the set of \emph{contours} of $\sigma^{\wi}$, i.e., edges  
whose {dual} edge in $E(\mathbb{T}^{\wi}_\n)$ carries two different values of the spin~$\sigma^{\wi}$ at its endpoints.
Given $\sigma$, to obtain the percolation configuration $\omega  \subseteq E(\mathbb{T}^{\bl}_\n)$, we proceed in steps:
\begin{itemize}
\item[$(i)$] we start with the configuration where all edges are closed,
\item[$(ii)$] we then declare each edge in $\eta(\sigma^{\wi})$ open,
\item[$(iii)$] for each edge $\{u,u'\} \in   E(\mathbb{T}^{\bl}_\n)$ 
still closed after step $(ii)$ and such that $\sigma(u) = \sigma(u')$, we toss an independent coin with success probability $1-1/c$. 
On success, we declare the edge open, and otherwise we keep it closed,
\item[$(iv)$] we denote by $\omega$ the set of all open edges.
\end{itemize}
Note that in particular $\eta(\sigma^{\wi})\subseteq \omega$.
We will write $\mathbf P_\n$ for the probability measure on configurations $(\sigma,\omega)\in \Sigma_\n \times \Omega_\n^{\bl}$ obtained from 
these steps when $\sigma$ is distributed according to $\mu^0_\n$.
Since the above procedure is local, independent for different edges, and invariant under the global sign change $\sigma \mapsto -\sigma$, from Proposition~\ref{lem:0conv} we immediately conclude the following.
\begin{corollary} \label{cor:P}
 $\mathbf P_\n$ converges weakly as $\n\to \infty$ to a probability measure $\mathbf P$ on 
$(\Sigma\times \Omega_{\bl}, \mathcal G \otimes \mathcal F)$ which is translation invariant, satisfies a mixing property as in Theorem~\ref{thm:existence}, and hence is ergodic on $\mathcal G_{\textnormal{even}} \otimes \mathcal F$ with respect to the translations of~$\mathbb Z^2_{\bl}$.
\end{corollary}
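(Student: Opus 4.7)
The plan is to reduce all four assertions to the corresponding facts about the marginal $\mu$ established in Proposition~\ref{lem:0conv} and Theorem~\ref{thm:existence}, exploiting the fact that the rule $\sigma\mapsto\omega$ is local and uses only independent external randomness. Concretely, whether an edge $e=\{u,u'\}\in E(\mathbb T_\n^\bl)$ lies in $\omega$ depends on the four spins at the endpoints of $e$ and its dual together with an independent Bernoulli coin $U_e$ of parameter $1-1/c$, and the coins $(U_e)$ are jointly independent of $\sigma$.

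The natural candidate is to define $\mathbf P$ as the joint law of $(\sigma,\omega(\sigma,U))$ with $\sigma\sim\mu$ and $U$ the i.i.d.\ coin family, independent of $\sigma$. For any local event $C\in \mathcal G\otimes \mathcal F$ depending on spins in a finite set $V$ and edges in a finite set $E$, integrating out the finitely many coins $\{U_e:e\in E\}$ produces a bounded continuous function $g_C$ on spin configurations, depending only on spins in the enlarged finite set $V\cup \partial E$. The same integration procedure works for $\mathbf P_\n$ since the coin-flip mechanism is identical, so $\mathbf P_\n(C)=\mathbf E_{\mu_\n^0}[g_C]$ and $\mathbf P(C)=\mathbf E_\mu[g_C]$. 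Proposition~\ref{lem:0conv} then yields $\mathbf P_\n(C)\to \mathbf P(C)$; since $\Sigma\times\Omega_\bl$ is compact, the convergence on local events upgrades to weak convergence. Translation invariance is immediate: $\mu$ is invariant under translations of $\mathbb Z^2_\bl$, and the construction commutes with such translations because the $(U_e)$ are i.i.d.

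For the mixing bound, take events $A,B\in \mathcal G_{\textnormal{even}}\otimes \mathcal F$ supported on finite boxes $\Lambda,\Lambda'$ and let $g_A,g_B$ be the conditional probabilities given $\sigma$. When $d(\Lambda,\Lambda')$ exceeds a constant, the coin families used to define $A$ and $B$ are disjoint, so $\mathbf P(A\cap B)=\mathbf E_\mu[g_Ag_B]$ while $\mathbf P(A)\mathbf P(B)=\mathbf E_\mu[g_A]\mathbf E_\mu[g_B]$. Evenness of $A,B$ transfers to $g_A,g_B$ since $\sigma\mapsto -\sigma$ commutes with the construction, so $g_A$ and $g_B$ are actually functions of the arrow configuration in the enlarged boxes $V\cup\partial E\subset \tilde\Lambda$ and $V'\cup\partial E'\subset \tilde\Lambda'$. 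Applying Theorem~\ref{thm:existence}(ii) to these local arrow-functions yields $|\mathbf P(A\cap B)-\mathbf P(A)\mathbf P(B)|\leq K d(\tilde\Lambda,\tilde\Lambda')^{-\kappa}\leq K'd(\Lambda,\Lambda')^{-\kappa}$. Ergodicity on $\mathcal G_{\textnormal{even}}\otimes \mathcal F$ under translations of $\mathbb Z^2_\bl$ then follows by the usual approximation of translation-invariant events by local ones, exactly as noted after the proof of Theorem~\ref{thm:existence}(ii). The only mild obstacle is bookkeeping the enlargement by $\partial E$ and checking that evenness of an event in $\mathcal G_{\textnormal{even}}\otimes\mathcal F$ genuinely passes to $g_A$ after integrating out the coins, both of which are automatic from the symmetry of the procedure under $\sigma\mapsto-\sigma$ and the independence of $U$ and $\sigma$.
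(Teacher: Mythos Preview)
Your argument is correct and is exactly the approach the paper takes: the paper's own justification is the single sentence ``Since the above procedure is local, independent for different edges, and invariant under the global sign change $\sigma \mapsto -\sigma$, from Proposition~\ref{lem:0conv} we immediately conclude the following,'' and you have simply spelled out what this means. The only minor remark is that Theorem~\ref{thm:existence}(ii) is stated for indicator functions rather than general bounded local functions $g_A,g_B$, but its proof proceeds via the expansion \eqref{eq:indA} into spin correlations and applies verbatim to any bounded local even function, so no extra work is needed.
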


The following result connecting the percolation properties of $\omega$ under $\mathbf P$ with the behaviour of the height function under $\mu$ was proved in~\cite{Lis19}.
\begin{lemma} \label{lem:Lis19}
For $c\in [\sqrt{3},2]$, if 
\[
\mathbf P( \exists \textnormal{ an infinite cluster of }\omega) = 0,
\] 
then 
\begin{align*}
\mathbf{Var}_{\mu} [h(u)] \to \infty \quad \text{ as } \quad |u| \to \infty,
\end{align*}
where $u \in \mathbb Z^2_{\wi} \cup \mathbb Z^2_{\bl} $ is a face of $\mathbb Z^2$. 
\end{lemma}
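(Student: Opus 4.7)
The plan is to extract from the absence of an infinite $\omega$-cluster a divergent lower bound on $\mathbf{Var}_\mu[h(u)]$ by producing many well-separated random ``barriers'' around $u$, each contributing a nontrivial random increment to the height, and then applying a second-moment argument leveraging the mixing property of $\mathbf P$ from Corollary~\ref{cor:P}.

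First I would analyse how $\omega$ constrains $h$. Since $\eta(\sigma^{\wi})\subseteq \omega$ by construction, any circuit in the dual percolation $\omega^{*}$ on the white lattice separates regions along which $\sigma^{\wi}$ is constant, hence along which $h$ is constant modulo $4$ on white faces (and analogously on black faces, where step $(iii)$ of the construction guarantees that each closed primal edge with disagreeing $\sigma^{\bl}$ values is genuinely a locus of height change). Next I would use the hypothesis of no infinite $\omega$-cluster, combined with the translation invariance, finite energy, and mixing of $\mathbf P$ (Corollary~\ref{cor:P}), together with the RSW/box-crossing estimates for the critical random cluster measure $\phi$ inherited through the coupling of Sections~2--3, to run a planar Zhang/Burton--Keane duality argument. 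This should yield infinitely many disjoint dual circuits $C_1,C_2,\ldots$ around $u_0$, with the number $N(u)$ of such circuits separating $u_0$ from $u$ tending to infinity in probability as $|u|\to\infty$.

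Write then
\[
h(u)-h(u_0) \;=\; \sum_{k=1}^{N(u)}\bigl(h(v_{k})-h(v_{k-1})\bigr),
\]
for a measurable choice of face $v_k$ on the $k$-th circuit (with $v_0=u_0$, $v_{N(u)+1}=u$). The key local input is that the increment $X_k := h(v_k)-h(v_{k-1})$ is supported on nonzero multiples of a fixed integer across each crossing of a dual circuit, so in particular $\mathbf{E}_\mu[X_k^2]\geq \delta>0$ uniformly. Choosing the circuits in exponentially growing annuli $A_k$ and applying the polynomial mixing property from Theorem~\ref{thm:existence}~$(ii)$ (which transfers to $\mathbf P$ through the coupling), the covariances $\mathbf{Cov}_\mu(X_j,X_k)$ decay polynomially in $|j-k|$, so a standard orthogonality-up-to-error estimate gives $\mathbf{Var}_\mu[h(u)]\gtrsim N(u)$, and the right-hand side diverges as $|u|\to\infty$.

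The main obstacle I anticipate is the uniform lower bound $\mathbf{E}_\mu[X_k^2]\geq \delta$: one has to rule out the degenerate scenario in which the separating circuits are overwhelmingly of a type for which $h$ happens to be the same on both sides. This requires a \emph{non-degeneracy} input — for instance, showing via the strong RSW for the random cluster model and the explicit coupling between $\omega$, $\sigma$, and the loop representation that each dual circuit has, conditionally on its location, a uniformly positive probability of sitting between genuinely distinct height levels. I would expect this to follow from a finite-energy perturbation of the arrow configuration in a small annular neighbourhood of the circuit, but the bookkeeping through the BKW correspondence and the conditioning on $\mathcal O^0_n$ is the technical heart of the argument.
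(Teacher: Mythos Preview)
The paper does not prove this lemma: it is quoted from \cite{Lis19}, so there is no in-paper proof to compare against. That said, your proposal diverges from the argument of \cite{Lis19} in a way that creates a genuine gap, and the ``main obstacle'' you flag is not a technicality but the whole point.

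Two concrete problems. First, you invoke RSW/box-crossing for $\omega$ ``inherited through the coupling'' with $\phi$. But $\omega$ is \emph{not} the random cluster configuration $\xi$ (except at $c=2$; see Remark~\ref{rem:c2}), and the BKW coupling of Sections~2--3 relates $\sigma$ to $\xi$, not $\omega$ to $\xi$. No RSW for $\omega$ is available. The existence of infinitely many dual circuits must come directly from non-percolation (every finite cluster is enclosed by a dual circuit), not from crossing estimates. Second, and more seriously, your increments $X_k$ need not be nonzero: a closed $\omega$-edge $\{u,u'\}$ may very well have $\sigma^{\bl}(u)=\sigma^{\bl}(u')$ and hence $h(u)=h(u')$, so consecutive dual circuits can sit at the same height. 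Your proposed fix via ``finite-energy perturbation of the arrow configuration through BKW'' is the wrong tool here.

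The mechanism actually used in \cite{Lis19} is the Edwards--Sokal property stated as Lemma~\ref{lem:EdSo}. Since every open $\omega$-edge has $\sigma^{\bl}$-agreement at its endpoints (for edges in $\eta(\sigma^{\wi})$ this is forced by the ice rule; for edges added in step~(iii) it is by construction), and since adjacent black faces with equal $\sigma^{\bl}$ have equal height, the height $h$ is \emph{constant} on each $\omega$-cluster. Conditionally on $\omega$, Lemma~\ref{lem:EdSo} then says the value of $(-1)^{h/2}$ on each cluster is an independent fair coin. Combined with the symmetric white-lattice process and a duality/non-coexistence argument, non-percolation yields infinitely many nested clusters around $u_0$, and the independent coin flips per cluster furnish the required independent $\pm 2$ increments directly---no perturbation or BKW bookkeeping is needed. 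This is where the non-degeneracy comes from, and it is the step your outline is missing.
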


Therefore, to prove Theorem~\ref{thm:delocalization} it is enough to show the following.
\begin{proposition}\label{prop:nopercolation}For $c\in [\sqrt{2+\sqrt 2},2]$, $\mathbf P( \exists \textnormal{ an infinite cluster of }\omega) = 0$.
\end{proposition}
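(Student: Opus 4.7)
The plan is a proof by contradiction. Suppose that $\mathbf{P}(\exists \text{ infinite cluster of }\omega) > 0$. Since this event depends only on $\omega$ and is translation invariant, it is a translation-invariant event in $\mathcal{G}_{\textnormal{even}} \otimes \mathcal{F}$, on which $\mathbf{P}$ is ergodic by Corollary~\ref{cor:P}; hence the event in fact has probability one. A Burton--Keane argument applied to the joint process $(\sigma, \omega)$ then yields a unique infinite cluster $C_\infty$ almost surely, giving $\theta_0 := \mathbf{P}(0 \in C_\infty) > 0$. The finite-energy input needed here is obtained by noting that any local modification of $\omega$ can be realized with positive conditional probability by first locally resampling $\sigma$ (using the Gibbsian character of $\mu$ inherited from the weak limit $\mu^0_{\n} \to \mu$) and then resampling the independent coins of step $(iii)$.

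The crucial structural input I would then isolate is that every $\omega$-cluster is monochromatic for $\sigma^\bl$: whenever $\{u, u'\} \in \omega$ one has $\sigma(u) = \sigma(u')$. For edges opened by step $(iii)$ this is immediate. For contour edges in $\eta(\sigma^\wi)$ opened by step $(ii)$, the claim requires a short case analysis at the vertex $v \in \mathbb{Z}^2$ dual to the edge: the height-differences around $v$ give six possible patterns of $\pm 1$ summing to zero, and in the cases where the two opposite white faces carry distinct spins (equivalently, their heights differ by $\pm 2$), the two opposite black faces are forced to have equal height and hence equal spin. In particular $C_\infty$ carries a single well-defined sign $s \in \{\pm 1\}$, equidistributed under $\sigma \mapsto -\sigma$.

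To obtain the contradiction I would extend the mixing property of Corollary~\ref{cor:P} from local events to the translation-invariant event $\{u \in C_\infty\}$ by the standard approximation $\{u \in C_\infty\} = \bigcap_R \{u \leftrightarrow \partial \Lambda_R \text{ in }\omega\}$, yielding
\[
\mathbf{P}(u \in C_\infty,\; u' \in C_\infty) \longrightarrow \theta_0^{\,2} \quad \text{as } |u-u'| \to \infty.
\]
Combining this with monochromaticity would give
\[
\mathbf{E}_{\mu}[\sigma(u)\sigma(u')] \;\geq\; \mathbf{P}(u \leftrightarrow u' \text{ in } \omega) \;\geq\; \mathbf{P}(u, u' \in C_\infty),
\]
hence $\liminf_{|u-u'|\to\infty} \mathbf{E}_{\mu}[\sigma(u)\sigma(u')] \geq \theta_0^{\,2} > 0$, contradicting Theorem~\ref{thm:decorrelation} and completing the proof.

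The main obstacle, and the technical heart of the argument, is the first inequality in the last display: monochromaticity immediately gives $\sigma(u)\sigma(u') = 1$ on $\{u \leftrightarrow u'\}$, but to derive the stated bound one must show that the contribution to $\mathbf{E}_{\mu}[\sigma(u)\sigma(u')]$ from the complementary event $\{u \not\leftrightarrow u'\}$ is nonnegative. This is an Edwards--Sokal-type positivity for the coupling $\mathbf{P}$, and corresponds to the positivity of the two-point function mentioned in the paper just before Theorem~\ref{thm:decorrelation}. Establishing it amounts to analyzing the conditional distribution of the cluster signs given $\omega$ under $\mathbf{P}$ and transferring the corresponding finite-volume identity from \cite{Lis19, GlaPel} to the infinite-volume limit, which is the role of the ``technical details'' alluded to in the outline of the approach.
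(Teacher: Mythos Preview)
Your overall architecture matches the paper's: assume percolation, deduce a unique infinite cluster, derive a uniform positive lower bound on $\mathbf P(u\leftrightarrow u'\text{ in }\omega)$, and contradict Theorem~\ref{thm:decorrelation}. The two places where your execution diverges from the paper are also the two places where it has genuine gaps.

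\textbf{Finite energy and Burton--Keane.} Your claim that ``any local modification of $\omega$ can be realized with positive conditional probability'' by resampling $\sigma$ and then the coins is false as stated: the law of $\omega$ under $\mathbf P$ is \emph{not} deletion tolerant. Edges in $\eta(\sigma^\wi)$ are forced open, and since contours of $\sigma^\wi$ form closed loops, removing a single such edge requires non-local changes to $\sigma^\wi$; the paper explicitly notes that ``removing edges from $\eta(\sigma^\wi)$ cannot be done locally and the cost can be arbitrarily high''. The paper therefore proves insertion tolerance separately (Lemma~\ref{lem:insertion}) and runs a modified Burton--Keane argument (Lemma~\ref{lem:BK}) in which trifurcations are built by first forcing $\sigma^\bl$ and then $\sigma^\wi$ to be constant on a box, thereby clearing all contours from its interior, before manipulating the coin edges.

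\textbf{The inequality $\mathbf E_\mu[\sigma(u)\sigma(u')]\geq \mathbf P(u\leftrightarrow u')$.} You correctly identify this as the crux, but your proposed route --- establishing the Edwards--Sokal description of the conditional law of $\sigma^\bl$ given $\omega$ under the infinite-volume measure $\mathbf P$ --- is circular here: Lemma~\ref{lem:EdSo} asserts that description for $\mathbf P$ \emph{only under the hypothesis that $\omega$ does not percolate}, which is precisely what you are trying to prove. The paper avoids this by staying in finite volume: it uses the unconditional identity~\eqref{eq:EScor} for $\mathbf P_\n$, bounds the local event $\{u\leftrightarrow u'\text{ in }\omega|_{\Lambda_N}\}$ by the global one, passes to the weak limit via Corollary~\ref{cor:P} and Proposition~\ref{lem:0conv}, and only then sends $N\to\infty$. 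This gives the inequality with no appeal to infinite-volume Edwards--Sokal. Once that inequality and uniqueness of the infinite cluster are in hand, the paper also obtains the uniform lower bound on $\mathbf P(u\leftrightarrow u')$ directly from insertion tolerance (fix $\Lambda$ with $\mathbf P(C_\infty\cap\Lambda\neq\emptyset)\geq 3/4$ and open all edges in $u+\Lambda$ and $u'+\Lambda$), which is shorter than your mixing argument for the non-local events $\{u\in C_\infty\}$ and sidesteps the issue that the constants in~\eqref{eq:mixing} depend on the box size.
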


We devote the rest of this section to the proof of this result. We note that percolation properties of related models were studied in~\cite{HolLi}.
We first recall a crucial property of the coupling between $\sigma$ and $\omega$ given by the following description of the conditional law of $\sigma^{\bl}$ given $\omega$~\cite{GlaPel,SpiRay,Lis19}, which is directly analogous to the Edwards--Sokal coupling between the Potts model and the random cluster model~\cite{EdwSok}.

\begin{lemma}[Edwards--Sokal property of $\omega$ and $\sigma^{\bl}$]\label{lem:EdSo}
Under the probability measure $\mathbf P_\n$, conditionally on~$\omega$, the spins $\sigma^\bl$ are distributed like an independent uniform assignment of a $\pm 1$ spin 
to each connected component of $\omega$. The same is true for $\mathbf P$ given that $\mathbf P( \exists \textnormal{ an infinite cluster of }\omega)=0$.
\end{lemma}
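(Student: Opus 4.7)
The plan is to establish the statement first at the finite-volume level for $\mathbf P_\n$, by writing its joint law on $(\sigma, \omega)$ as a product of local weights at each vertex of $\mathbb T_\n$, and then to pass to $\mathbf P$ using Corollary~\ref{cor:P} together with the no-infinite-cluster hypothesis.

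Since steps $(ii)$--$(iii)$ in the construction of $\omega$ inspect only the four spins around each vertex and then toss independent coins, I expect the joint law under $\mathbf P_\n$ to factorize as a product over vertices of $\mathbb T_\n$. I would classify valid local spin configurations by the six-vertex type (using $\sigma=i^h$): type A (alternating arrows, weight $c$) has both black spins equal and both white spins equal; type B has differing black, equal white; and type C has equal black, differing white. A direct inspection of the coin-toss rules gives a local weight of $c-1$ at a type-A vertex when $e_v$ is open and $1$ in the remaining admissible cases, while types B and C force $e_v$ to be closed and open respectively. Consequently,
\[
\mathbf P_\n(\sigma,\omega) \;\propto\; (c-1)^{|\omega\setminus\eta(\sigma^\wi)|}\,\mathbf{1}[\eta(\sigma^\wi)\subseteq\omega,\ \sigma^\bl\text{ agrees across every edge of }\omega].
\]

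The conditional computation is then essentially immediate. Fix $\omega$ and any $\sigma^\wi$ with $\eta(\sigma^\wi)\subseteq\omega$: the exponent $|\omega\setminus\eta(\sigma^\wi)|=|\omega|-|\eta(\sigma^\wi)|$ does not depend on $\sigma^\bl$, so $\sigma^\bl\mid(\omega,\sigma^\wi)$ is uniform on the set of $\sigma^\bl$ that make $(\sigma^\bl,\sigma^\wi)$ valid and $\omega$-compatible. At each open edge the two admissible local types A and C both require equal black spins, whereas at each closed edge the admissible types A and B place no constraint on $\sigma^\bl$, so the compatibility set is exactly $\{\sigma^\bl:\sigma^\bl\text{ is constant on each }\omega\text{-cluster}\}$. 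Since neither the set nor its uniform measure involves $\sigma^\wi$, integrating out $\sigma^\wi$ proves the Edwards--Sokal property for $\mathbf P_\n$.

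For the infinite-volume statement, assuming no infinite $\omega$-cluster $\mathbf P$-a.s., every cluster is almost surely contained in some box $\Lambda_N$ for $N$ large, so for any local test event involving $\sigma^\bl|_F$ and a local observable of $\omega$ on a finite set $F$ of faces the relevant cluster structure is already determined inside some $\Lambda_N$. I would apply the finite-volume Edwards--Sokal identity inside $\Lambda_N$ and control the error by the probability that some $\omega$-cluster meeting $F$ leaves $\Lambda_N$, which tends to $0$ by the no-percolation hypothesis, then pass to the limit $\n\to\infty$ using Proposition~\ref{lem:0conv} and Corollary~\ref{cor:P}. The main obstacle I expect lies here: $\mathbf P_\n$ is defined via conditioning on the nonlocal event $\mathcal O^0_\n$, so one has to verify that the local vertex-by-vertex factorization above survives the weak limit before the finite-volume identity can be invoked on restrictions to $\Lambda_N$.
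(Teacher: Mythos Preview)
The paper does not actually prove this lemma; it states it as a known fact with citations to \cite{GlaPel,SpiRay,Lis19} and moves on. So there is no in-paper argument to compare against, and your proposal supplies a proof where the paper supplies none.

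Your finite-volume argument is correct. The factorization
\[
\mathbf P_\n(\sigma,\omega)\ \propto\ (c-1)^{|\omega\setminus\eta(\sigma^\wi)|}\,\mathbf 1\{\eta(\sigma^\wi)\subseteq\omega,\ \sigma^\bl\text{ constant on }\omega\text{-clusters}\}
\]
holds exactly, and the conditional uniformity of $\sigma^\bl$ given $\omega$ follows immediately since the weight and the constraint set for $\sigma^\bl$ are both independent of $\sigma^\wi$. Your worry in the last paragraph is misplaced: conditioning on $\mathcal O^0_\n$ is precisely what makes the spin configuration $\sigma$ globally well defined on the torus, and once you parametrize by $\sigma\in\Sigma_\n$ the measure is \emph{exactly} $\mu^0_\n(\sigma)\propto c^{\#\{\text{type-A vertices}\}}$ with no residual global constraint. (Note also that the two conditions in your indicator already force the local spin constraint $(\sigma(u_1)-\sigma(u_2))(\sigma(w_1)-\sigma(w_2))=0$, so the formula needs no extra validity indicator.)

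Your infinite-volume outline is also fine and can be made precise as follows. It suffices to show $\mathbf E_{\mathbf P}\big[\prod_{u\in S}\sigma(u)\,\mathbf 1_A\big]=\mathbf E_{\mathbf P}\big[\mathbf 1_A\,\mathbf 1_{\{\text{each }\omega\text{-cluster meets }S\text{ evenly}\}}\big]$ for finite $S\subset\mathbb Z^2_\bl$ and local $A\in\mathcal F$. The finite-$\n$ identity holds by your argument. On the event $E_N=\{$every $\omega$-cluster meeting $S$ is contained in $\Lambda_N\}$, the parity indicator coincides with its $\Lambda_N$-local version; bound the discrepancy by $\mathbf P_\n(\{S\leftrightarrow\partial\Lambda_N\text{ in }\omega\})$, which is a local event and hence converges to $\mathbf P(\{S\leftrightarrow\partial\Lambda_N\})\to 0$ as $N\to\infty$ under the no-percolation hypothesis. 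Then pass to the limit in $\n$ using Corollary~\ref{cor:P} and finally let $N\to\infty$.
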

As a direct consequence we obtain a relation between connectivities in $\omega$ and spin correlations,
\begin{align} \label{eq:EScor}
\mathbf P_{\n}(u \textnormal{ connected to } u' \textnormal{ in } \omega)  = \mathbf{E}_{\mu^0_{\n}}[\sigma(u)\sigma(u')].
\end{align}
The idea now is to use this identity and the decorrelation of spins from Theorem~\ref{thm:decorrelation} to conclude no percolation for $\omega$.
\begin{remark} \label{rem:c2}
For $c=2$, both the distribution of $\omega$ under $\mathbf P$ and of $\xi$ under $\phi$ are the critical random cluster model with $q=4$ (see \cite{Lis19}).
In this case we know that formula \eqref{eq:EScor} also holds in the infinite volume (since $\omega$ does not percolate), and it is identical to formula \eqref{eq:example}
since for $q=4$, we have $\rho=0$ and the event that there is no loop separating $u$ from $u'$ in $L(\xi)$ is the same as the event of $u$ being connected to $u'$ in~$\xi$.
\end{remark}

We will first need to prove that there is at most one infinite cluster in $\omega$ under~$\mathbf P$. 
To this end,
we start with establishing insertion tolerance of $\omega$.
\begin{lemma}[Insertion tolerance of $\omega$] \label{lem:insertion}
For $c\in [\sqrt{3},2]$, the law of $\omega$ under $\mathbf P$ is insertion tolerant as defined in~\eqref{eq:insertion}.
\end{lemma}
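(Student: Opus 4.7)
The plan is to establish insertion tolerance for the finite-volume measures $\mathbf P_\n$ with a constant $\epsilon = (1-1/c)/2$ that does not depend on $\n$, and then transfer the bound to $\mathbf P$ by weak convergence (Corollary~\ref{cor:P}). The constant is positive since $c\geq \sqrt 3>1$. The crucial advantage of working in finite volume is that the Edwards--Sokal property of Lemma~\ref{lem:EdSo} is available unconditionally; at the level of $\mathbf P$ we are still trying to rule out percolation, so Lemma~\ref{lem:EdSo} cannot yet be invoked for $\mathbf P$ itself.

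The key structural input is that the construction of $\omega$ from $\sigma$ uses, for each edge of $\T^\bl_\n$, an \emph{independent} Bernoulli coin of parameter $1-1/c$. Consequently, conditionally on $\sigma$, the family $\{\omega(f)\}_f$ is mutually independent; in particular, for any $e=\{u,u'\}\in E(\T^\bl_\n)$ and any $A\in \mathcal F_{\{e\}^c}$, the edge $\omega(e)$ is conditionally independent of $A$ given $\sigma$. Moreover, step~(iii) of the construction immediately gives the pointwise bound
\[
\mathbf P_\n(\omega(e)=1 \mid \sigma) \;\geq\; (1-1/c)\,\mathbf 1_{\sigma(u)=\sigma(u')}.
\]
Combining the conditional independence with this bound and integrating against $\mathbf 1_A$ yields
\[
\mathbf P_\n(\omega(e)=1,\,A) \;\geq\; (1-1/c)\,\mathbf P_\n(\sigma(u)=\sigma(u'),\,A).
\]

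To finish at the finite-volume level, I bound the right-hand side using Lemma~\ref{lem:EdSo}: conditionally on $\omega$, the spins on black faces are a uniform independent $\pm 1$ assignment on the connected components of $\omega$, so
\[
\mathbf P_\n(\sigma(u)=\sigma(u')\mid \omega) \;\geq\; \tfrac 12 \qquad \text{deterministically}
\]
(equal to $1$ if $u$ and $u'$ lie in the same $\omega$-cluster, and to $1/2$ otherwise). Since $A$ is $\omega$-measurable, this gives $\mathbf P_\n(\sigma(u)=\sigma(u'),\,A)\geq \tfrac12 \mathbf P_\n(A)$, and therefore $\mathbf P_\n(\omega(e)=1\mid A)\geq \tfrac12(1-1/c) =: \epsilon$. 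Since $\epsilon$ does not depend on $\n$, the linear inequality $\mathbf P(\omega(e)=1,\,A)\geq \epsilon\,\mathbf P(A)$ passes, via weak convergence of $\mathbf P_\n$ to $\mathbf P$, to all cylinder events $A\in\mathcal F_{\{e\}^c}$, and then by a standard monotone-class argument to all of $\mathcal F_{\{e\}^c}$, yielding \eqref{eq:insertion}. I do not foresee a substantive obstacle: both the conditional independence and the bound $\geq 1/2$ are built into the coupling, and the passage to the limit is routine; the only point requiring care is that one must go through the $\mathbf P_\n$'s rather than apply Lemma~\ref{lem:EdSo} to $\mathbf P$ directly.
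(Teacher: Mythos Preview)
Your argument is correct and follows essentially the same route as the paper: establish a uniform finite-volume bound via the conditional independence of the edge-opening coins given $\sigma$ together with the Edwards--Sokal property $\mathbf P_\n(\sigma(u)=\sigma(u')\mid\omega)\geq\tfrac12$, then pass to the weak limit. The paper carries out the computation on atoms $\{\omega=\zeta_e\}$ and $\{\omega=\zeta^e\}$ with an explicit case split on the white spins, arriving at $\epsilon=(c-1)/(c+1)$, whereas your direct conditioning on $\sigma$ absorbs that case split into the single pointwise bound $\mathbf P_\n(\omega(e)=1\mid\sigma)\geq(1-1/c)\mathbf 1_{\sigma(u)=\sigma(u')}$ and yields the slightly smaller constant $(c-1)/(2c)$; the difference is purely cosmetic.
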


\begin{proof}
Since $\mathbf P$ is the weak limit of $\mathbf P_\n$, it is enough to prove that $\mathbf P_\n$ satisfies \eqref{eq:insertion} with a constant $\epsilon>0$ that is independent of $\n$.

To this end, for a configuration $\zeta \in \Omega^{\bl}_\n $ and an edge $e=\{u_1,u_2\} \in E(\mathbb T^{\bl}_\n)$, let $\zeta^e,\zeta_e \in \Omega^{\bl}_\n$ be the configurations that 
agree with $\zeta$ on $E(\mathbb T^{\bl}_\n)\setminus \{ e\}$ and such that $\zeta^e(e)=1$ and $\zeta_e(e)=0$.
Note that by Lemma~\ref{lem:EdSo} we have
\begin{align}
\mathbf P_\n (\omega = \zeta^e)&\geq\mathbf P_\n (\omega = \zeta^e, \sigma(u_1) =\sigma(u_2),  \sigma(v_1) =\sigma(v_2)) \nonumber  \\
& =\tfrac p{1-p} \mathbf P_\n (\omega = \zeta_e, \sigma(u_1) =\sigma(u_2),  \sigma(v_1) =\sigma(v_2)) \nonumber \\
& = \tfrac p{1-p} \mathbf P_\n (\omega = \zeta_e, \sigma(u_1) =\sigma(u_2)) \nonumber\\
& \geq \tfrac 12 \tfrac p{1-p}  \mathbf P_\n (\omega = \zeta_e) \nonumber \\
& = \tfrac 12(c-1) \mathbf P_\n (\omega = \zeta_e) \label{eq:insertionbound} ,
\end{align}
where $\{v_1,v_2\}\in E(\mathbb{T}^{\wi}_\n)$ is the dual edge of $e$, and $p=1-c^{-1}$ is the success probability from step $(iii)$ of the definition of $\omega$.
To get the first equality, we used the fact that if $\sigma(u_1) =\sigma(u_2)$ and $\sigma(v_1) =\sigma(v_2)$, then we can open $e$ only in step $(iii)$ by tossing a coin. 
In the second equality, we used that if $e$ is closed then necessarily $\sigma(v_1) =\sigma(v_2)$. The last inequality follows from Proposition~\ref{lem:EdSo} and the fact that,
on the event that $v_1$ is not connected to $v_2$ in $\omega$, both faces obtain independent $\pm 1$ spins (otherwise, they must have the same spin).
From \eqref{eq:insertionbound} we get that
\[
\mathbf P (\omega(e)=1 \mid \omega(e') = \zeta(e') \textnormal{ for } e'\neq e) \geq \tfrac{c-1}{c+1} ,
\]
and hence \eqref{eq:insertion} holds true with $\epsilon = (c-1)/(c+1)$. This ends the proof.
\end{proof}

We will now exclude the possibility of more than one infinite clusters in $\omega$ under~$\mathbf P$. Since the law of $\omega$ is
not deletion tolerant in the sense of \eqref{eq:insertion}, we need to slightly modify the classical argument of Burton and Keane~\cite{BK}.
(Actually, one can always remove an edge from $\omega \setminus \eta(\sigma^{\wi})$ by paying a constant price, but removing edges from $ \eta(\sigma^{\wi})$
cannot be done locally and the cost can be arbitrarily high).
\begin{lemma} \label{lem:BK}
For $c\in [\sqrt{3},2]$, 
\begin{align*} 
\mathbf P(\exists \textnormal{ more than one infinite cluster of $\omega$})=0.
\end{align*}
\end{lemma}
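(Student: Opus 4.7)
The plan is to adapt the classical Burton--Keane trifurcation argument, with modifications to cope with the fact that $\omega$ under $\mathbf P$ is only insertion tolerant (Lemma~\ref{lem:insertion}) and not deletion tolerant: an edge in $\omega\setminus\eta(\sigma^{\wi})$ can always be closed locally at cost $1/c$, but removing an edge of $\eta(\sigma^{\wi})$ requires a non-local change of $\sigma^{\wi}$. By Corollary~\ref{cor:P}, $\mathbf P$ is ergodic on $\mathcal G_{\textnormal{even}}\otimes\mathcal F$, and the number $N$ of infinite clusters of $\omega$ is translation invariant and $\mathcal F$-measurable, hence $\mathbf P$-a.s.\ equal to some constant $k\in\{0,1,2,\ldots,\infty\}$. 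To rule out $k\in\{2,3,\ldots\}$, I would fix a box $B$ large enough that the event $A_B\in\mathcal F_{B^c}$ that all $k$ infinite clusters of $\omega|_{B^c}$ touch $\partial B$ has positive probability; iterating the insertion bound from Lemma~\ref{lem:insertion} edge-by-edge yields
\[
\mathbf P\bigl(A_B\cap\{\omega|_B\equiv 1\}\bigr)\geq \Bigl(\tfrac{c-1}{c+1}\Bigr)^{|B|}\mathbf P(A_B)>0,
\]
and on this event the $k$ infinite clusters merge through the fully open box $B$ into at most one, contradicting $N\equiv k$.

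Next, I would rule out $k=\infty$ via the Burton--Keane trifurcation count. Call $x$ a \emph{trifurcation} of $\omega$ if $x$ belongs to an infinite cluster $C$ and removing $x$ together with its incident edges decomposes $C$ into at least three infinite components. The topological lemma of Burton--Keane bounds the number of trifurcations inside any box $\Lambda_n$ of side $n$ by $|\partial\Lambda_n|=O(n)$, so translation invariance forces $\mathbf P(0\text{ is a trifurcation})=0$. Under the assumption $k=\infty$, for $B$ large enough the event $G'_B\in\mathcal F_{B^c}$ that three distinct infinite clusters of $\omega|_{B^c}$ touch $\partial B$ at prescribed points $y_1,y_2,y_3$ has $\mathbf P(G'_B)>0$, and iterated insertion tolerance then lets me open three vertex-disjoint paths $\pi_1,\pi_2,\pi_3\subset B$ from $0$ to $y_1,y_2,y_3$ at positive conditional cost, attempting to turn $0$ into a trifurcation.

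The main obstacle is verifying that after this insertion the three branches genuinely lie in three distinct components of $\omega\setminus\{0\}$: any additional open edge of $\omega|_B$ not on the $\pi_i$ and not incident to $0$ could connect two of the $y_i$'s through $B\setminus\{0\}$ and spoil the trifurcation. A bridging non-contour edge can be closed at cost $1/c$ using the partial deletion available for edges in $\omega\setminus\eta(\sigma^{\wi})$, but a bridging path made of contour edges from $\eta(\sigma^{\wi})$ cannot be removed locally. I expect the hard part to be intersecting with a further good event forbidding such contour bridges inside $B$ while preserving positive probability; this would use the conditional structure of $\sigma|_B$ given $\sigma|_{B^c}$ under $\mu^0$ (and its $\n\to\infty$ limit under $\mathbf P$), combined with the partial deletion on non-contour edges. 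Once this refined event is shown to have positive probability uniformly in $B$, one obtains $\mathbf P(0\text{ is a trifurcation})>0$, contradicting the topological bound and thus ruling out $k=\infty$.
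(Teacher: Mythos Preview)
Your outline is correct up to and including the identification of the obstacle: the trifurcation construction fails unless you can prevent contour edges of $\eta(\sigma^{\wi})$ from bridging two of your three branches inside the box, and you acknowledge that you do not yet know how to do this. This is a genuine gap, not a detail---the whole content of the lemma beyond the classical Burton--Keane template lies in this step.

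The paper's device is the following. Rather than trying to forbid individual contour bridges, it \emph{removes all contours from the interior of the box at once} by freezing both spin fields to constants there. First, by the Edwards--Sokal property (Lemma~\ref{lem:EdSo}), conditionally on $\omega$ one may force $\sigma^{\bl}\equiv +1$ on $V(\Lambda)$ at cost $2^{-|V(\Lambda)|}$. Once $\sigma^{\bl}$ is constant on $V(\Lambda)$, the ice-rule constraint places no restriction on $\sigma^{\wi}$ inside the dual box $\Lambda^*$, so one may also force $\sigma^{\wi}$ constant on $V(\Lambda^*)$ at a further positive cost $\delta_1$ depending only on $\Lambda$. On this event $\eta(\sigma^{\wi})$ has no edges in the interior of $\Lambda$, and every edge of $\omega$ there is a non-contour edge that can be opened or closed freely; the trifurcation can then be built exactly as in the classical argument. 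The price paid is that setting $\sigma^{\wi}\equiv \varsigma i$ on $V(\Lambda^*)$ may create new contour arcs along $\partial\Lambda$ which merge some of the infinite clusters of $\omega|_{\Lambda^c}$. To survive this, the paper starts not with three but with \emph{six} infinite clusters touching $\partial\Lambda$ (the event $C_6$), and observes by a planarity/pigeonhole argument that for at least one of the two choices $\varsigma=\pm 1$ the configuration $\omega|_{\Lambda^c}\cup\eta(\sigma_{\varsigma}^{\wi})$ still has at least three infinite clusters. This is the missing idea in your proposal.
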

\begin{proof}[Proof of Lemma~\ref{lem:BK}]
By Corollary~\ref{cor:P}, $\mathbf P$ is translation invariant and ergodic when projected to $\mathcal F$, and by Lemma~\ref{lem:insertion}, it is insertion tolerant. Hence, by classical arguments we have 
that
\[
\mathbf P(\exists \textnormal{ more than one but finitely many infinite clusters of $\omega$})=0.
\]
To conclude the proof we therefore need to show that
\begin{align} \label{eq:Cinfty}
\mathbf P(\exists \textnormal{ infinitely many infinite clusters of $\omega$})=0.
\end{align}
To this end, we say that $0\in \mathbb Z^2_{\bl}$ is a \emph{trifurcation} if it belongs to an infinite cluster of $\omega$
that splits into exactly three infinite and no finite clusters after removing $0$ and the edges incident on $0$.
We assume by contradiction that the probability in \eqref{eq:Cinfty} is equal to $1$ (we can assume this by ergodicity) of $\mathbf P$. We will show that under this assumption 
\[
\mathbf P(0 \textnormal{ is a trifurcation})>0.
\]
This will yield the desired contradiction in the same way as in the original argument of Burton and Keane. 

In what follows, we will construct trifurcations by modifying (in steps) the configuration $(\sigma,\omega)$ inside a large but finite box. 
To this end, for $\Lambda\subset E(\mathbb Z^2_\bl)$, let
\[
C_6(\Lambda) = \{ \partial \Lambda \textnormal{ intersects at least \emph{six} infinite clusters of } \omega|_{  \Lambda^c} \} \in\mathcal G \otimes \mathcal F_{\Lambda^c},
\]
where $\partial \Lambda$ is the set of vertices of $\Lambda$ adjacent to a vertex outside $\Lambda$, and $\omega|_{  \Lambda^c}$ is the restriction of the configuration $\omega$ to the edges of $\Lambda^c$. 
We now fix $\Lambda$ to be a square box large enough so
that for $C_6=C_6(\Lambda)$, 
\begin{align*} \label{eq:I6}
\mathbf P(C_6 )>1/2.
\end{align*}
For a set of black vertices $B$ and white vertices $W$, we define
\begin{align*}
S_{\pm}(B) &= \{ \sigma  \textnormal{ is constant and equal } \pm 1 \textnormal{ on } B\} \in \mathcal G \otimes \mathcal F, \textnormal{ and} \\
 S_{\pm}(W)& = \{  \sigma  \textnormal{ is constant and equal } \pm i \textnormal{ on } W\} \in \mathcal G \otimes \mathcal F.
\end{align*}
Note that by the Edwards--Sokal property from Lemma~\ref{lem:EdSo}, for any event $I\in \mathcal G \otimes \mathcal F$ depending only on $\omega$, we have  
\[
\mathbf P_{\n} (S_+(B) \mid I ) \geq (\tfrac 1 2)^{|B|}
\]
independently of $\n$. Hence, 
by the weak convergence of $\mathbf P_{\n}$ to $\mathbf P$ we know that
\[
\mathbf P (S_+(V(\Lambda)) \mid C_6 ) \geq (\tfrac 12) ^{|V( \Lambda)|}.
\]

Recall that $\eta(\sigma^\wi)\subset E(\mathbb Z^2_\bl)$ is the set of interfaces separating spins of different value in $\sigma^\wi$ which in turn is the restriction of $\sigma$ to $\mathbb Z^2_\wi$.
The crucial observation now is that for each $(\sigma,\omega) \in  S_+(V(\Lambda))\cap C_6$, one can choose a constant sign $\varsigma=\varsigma(\sigma,\omega)=\pm 1$ such that
there are at least \emph{three} infinite clusters in $ \omega|_{ \Lambda^c} \cup \eta( \sigma_{\varsigma}^\wi)$, where 
\[ \sigma_{\varsigma}(u)= \begin{cases} \varsigma & \textnormal{ for } u\in V(\Lambda^*), \\
\sigma(u)  & \textnormal{ otherwise},
\end{cases}
\]
and where $\Lambda^*\subset E(\mathbb Z^2_{\wi})$ is the box whose vertices are the bounded faces of $\Lambda$ (see Fig.~\ref{fig:BK}). 
\begin{figure}
\begin{center}
 \includegraphics[scale=1]{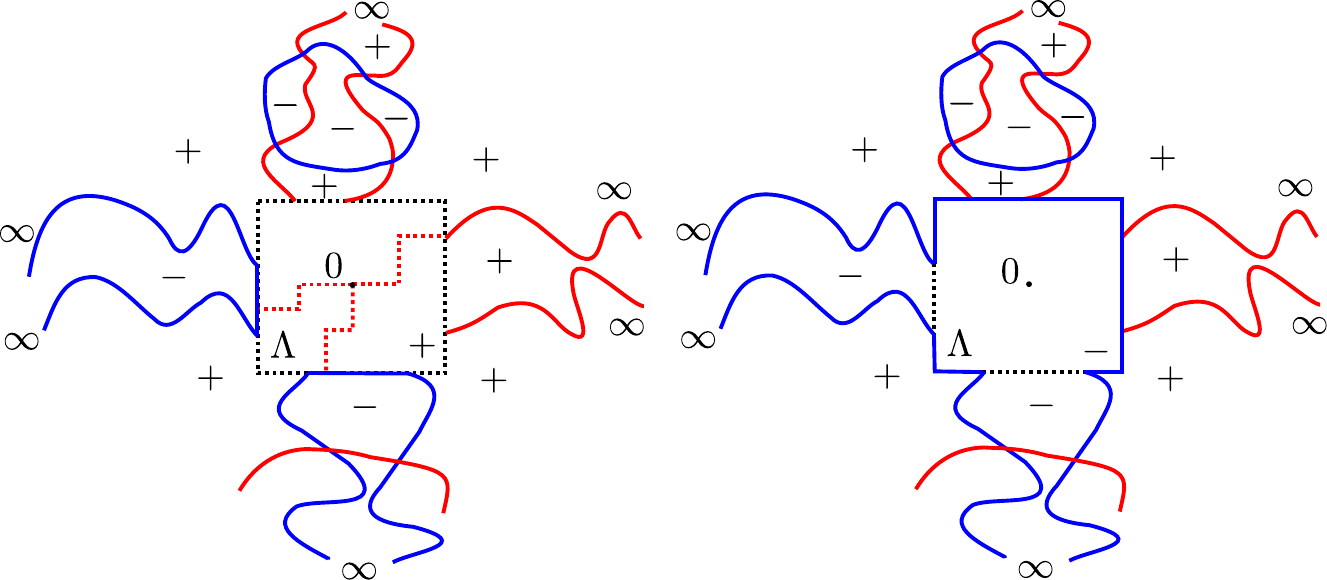}
\caption{The dotted black lines represent the boundary edges of $\Lambda$. 
The signs show the sign of $-i\sigma^{\wi}$, the blue lines represent $\eta(\sigma_{\pm}^\wi)$
and the red lines show the edges of $\omega|_{\Lambda^c} \setminus \eta(\sigma_{\pm}^\wi)$ with $(\sigma,\omega)\in C_6$. There are five infinite clusters in $\omega |_{ \Lambda^c}\cup \eta(\sigma_+^\wi) $ and one in $\omega |_{ \Lambda^c}\cup \eta(\sigma_-^\wi) $.
The red dotted edges are added at finite cost to create a trifurcation at $0$
}
 \label{fig:BK}
\end{center}
\end{figure}
Moreover, we have that
\begin{align*}
T_{\pm}:= \{ (\sigma,\omega)\in S_+(V(\Lambda))\cap C_6: \varsigma (\sigma,\omega) = \pm 1\} \in \mathcal G_{ V(\Lambda^*)^c } \otimes \mathcal F_{\Lambda^c} , 
\end{align*}
where $\mathcal G_{ V(\Lambda^*)^c }$ is the $\sigma$-algebra generated by the spins outside $V(\Lambda^*)$.
Furthermore, for any $I\in \mathcal G_{ V(\Lambda^*)^c } \otimes \mathcal F_{\Lambda^c} $,
\begin{align} \label{eq:delta1}
\mathbf P (S_\pm(V(\Lambda^*)) \mid S_+( V(\Lambda))\cap I ) \geq \delta_1 >0,
\end{align}
where $\delta_1$ depends only on $\Lambda$. 
Indeed, by the definition of the spin model, the only constraint on the values of spins is that if $u,u'\in \mathbb Z^2_{\bl}$ and $v,v'\in \mathbb Z^2_{\wi}$
are incident on a common vertex of $\mathbb Z^2$, then $(\sigma(u)-\sigma(u'))(\sigma(v)-\sigma(v'))=0$. In other words, the interfaces $\eta(\sigma^\bl)$ and $\eta(\sigma^\wi)$ cannot cross.
Since here we assume that $\sigma$ is constant on $V(\Lambda)$, we can always set $\sigma$ to be constant on $V(\Lambda^*)$ and keep this constraint satisfied.
This means that the equivalent of \eqref{eq:delta1} is satisfied by $\mathbf P_\n$ for all $\n$, and hence \eqref{eq:delta1} holds true by taking the weak limit.
We now define 
\begin{align*}
S&= \big(S_+(V(\Lambda^*)) \cup S_-(V(\Lambda^*))\big)\cap S_+(V(\Lambda)), \textnormal{ and }\\
C_3&=\{\textnormal{there are at least \emph{three} infinite clusters in $ \omega|_{ \Lambda^c} \cup \eta( \sigma^\wi)$}\}.
\end{align*}
Note that conditioned on $C_3 \cap S$, one can construct a trifurcation with probability $\delta_2>0$ (depending only on $\Lambda$) by opening some of the edges of $\Lambda$ to create three paths connecting $0$ to three infinite clusters at the boundary of $\Lambda$, and by keeping the remaining edges closed (as depicted on the left-hand side of Fig.~\ref{fig:BK}).
Here we use the definition of the process $\omega$ and fact that $\sigma$ is constant on $V(\Lambda^*)$, and hence the contour configurations
$\eta(\sigma^\wi)$ does not intersect the interior of $\Lambda$.

All in all, we have 
\begin{align*}
\mathbf P(0 \textnormal{ is a trifurcation}) &\geq \delta_2 \mathbf P(C_3 \cap S) \\
& \geq\delta_2[ \mathbf P (S_+(V(\Lambda^*))\cap T_+ ) + \mathbf P (S_-(V(\Lambda^*))\cap T_- )] \\
& =  \delta_2[\mathbf P (S_+(V(\Lambda^*))| T_+ )  \mathbf P (T_+)+\mathbf P (S_-(V(\Lambda^*))| T_- )  \mathbf P (T_-)]\\
&\geq\delta_1\delta_2[  \mathbf P (T_+)+ \mathbf P (T_-)] \\
&= \delta_1\delta_2 \mathbf P(S_+(V(\Lambda))\cap C_6) \\
&= \delta_1\delta_2\mathbf P(S_+(V(\Lambda))| C_6) \mathbf P (C_6)\\
&\geq \delta_1\delta_2 (\tfrac 12) ^{|V( \Lambda)|+1} \\
&>0.
\end{align*}
Using arguments exactly as in~\cite{BK} we finish the proof.
\end{proof}

We are finally ready to show that $\omega$ does not percolate under $\mathbf P$, which by Lemma~\ref{lem:Lis19} will yield delocalization of the height function for $c\in[\sqrt{2+\sqrt 2},2]$.

\begin{proof}[Proof of Proposition~\ref{prop:nopercolation}]
For $u,u'\in \mathbb Z^2_{\bl}$, by Corollary~\ref{cor:P} we have
\begin{align*}
\mathbf P(u \textnormal{ connected to } u' \textnormal{ in } \omega) &= \lim_{N\to \infty} \mathbf P(u \textnormal{ connected to } u' \textnormal{ in } \omega|_{\Lambda_N}) \\
& = \lim_{N\to \infty} \lim_{\n\to \infty}\mathbf P_{\n}(u \textnormal{ connected to } u' \textnormal{ in } \omega|_{\Lambda_N}) \\
&\leq   \lim_{\n\to \infty}\mathbf P_{\n}(u \textnormal{ connected to } u' \textnormal{ in } \omega)\\
& =  \lim_{\n\to \infty} \mathbf{E}_{\mu^0_{\n}}[\sigma(u)\sigma(u')] \\
& = \mathbf{E}_{\mu}[\sigma(u)\sigma(u')].
\end{align*}
The second last equality follows from the Edwards--Sokal property~\eqref{eq:EScor}, and the last one from Proposition~\ref{lem:0conv}.
Combining this with the decorrelation of spins from Theorem~\ref{thm:decorrelation}, we get that 
\begin{align} \label{eq:disconnection}
\mathbf P(u \textnormal{ connected to } u' \textnormal{ in } \omega)\to 0\quad \textnormal{ as } \quad |u-u'|\to \infty.
\end{align}

To finish the proof, we now proceed by contradiction along classical lines. We assume that
$\mathbf P( \exists \textnormal{ an infinite cluster of }\omega) > 0$, and by ergodicity of $\mathbf P$ from Corollary~\ref{cor:P} and Lemma~\ref{lem:BK}, we have that 
\[
\mathbf P(\exists \textnormal{ a unique infinite cluster of } \omega) =1.
\] 
We now fix a box $\Lambda \subset E(\mathbb Z^2_{\bl})$ so large that
\begin{align}\label{eq:34}
\mathbf P(\textnormal{the infinite cluster of } \omega \textnormal{ intersects }\Lambda) \geq 3/4.
\end{align}
Let 
\begin{align*}
A&=\{\textnormal{the infinite cluster of } \omega \textnormal{ intersects }u+\Lambda \textnormal{ and } u'+\Lambda\},\\
B&= \{  \omega \textnormal{ is constant and equal to $1$ on $u+\Lambda$ and $u'+\Lambda$} \}.
\end{align*}
Then, by translation invariance and \eqref{eq:34} we have $\mathbf P(A) \geq 1/2$, and by insertion tolerance from Lemma~\ref{lem:insertion}, 
we have $\mathbf P(B\mid A) \geq \epsilon^{2|\Lambda|} \mathbf P(A)$, where $\epsilon>0$ is as in \eqref{eq:insertion}.
We can now write
\begin{align*}
&\mathbf P(u \textnormal{ connected to } u' \textnormal{ in } \omega ) \geq \mathbf P(A\cap B) \geq \epsilon^{2|\Lambda|} /2.
\end{align*}
Since this lower bound is positive and independent of $u$ and $u'$, we get a contradiction with \eqref{eq:disconnection}, and we finish the proof.
\end{proof}

\bibliographystyle{amsplain}
\bibliography{iloop}
\end{document}